\newcommand{\ddelta}{\ensuremath{\Delta\!\!\!\Delta}\xspace}
\newtheorem{prop}{Proposition}
\newtheorem{theorem}{Theorem}
\newtheorem{remark}{Remark}
\def\real{{\mathord{{\rm I\kern-2.8pt R}}}}        
\def\inte{{\mathord{{\rm I\kern-2.8pt N}}}}
\def\sZZ{{\rm Z\kern-2.8ptem{}Z}}
\def\z{{\mathchoice
  {\sZZ}
  {\sZZ}
  {\rm Z\kern-0.30em{}Z}
  {\rm Z\kern-0.25em{}Z} }}
\def\sQQ{{\kern 0.27em \vrule height1.45ex width0.03em depth0em
          \kern-0.30em \rm Q}}
\def\qu{{\mathchoice
    {\sQQ}
    {\sQQ}
  {\kern 0.225em \vrule height1.05ex width0.025em depth0em \kern-0.25em \rm Q}
  {\kern 0.180em \vrule height0.78ex width0.020em depth0em \kern-0.20em \rm Q}
        }}
\def\sCC{{\kern 0.27em \vrule height1.45ex width0.03em depth0em
          \kern-0.30em \rm C}}
\def\complex{{\mathchoice
    {\sCC}
    {\sCC}
  {\kern 0.225em \vrule height1.05ex width0.025em depth0em \kern-0.25em \rm C}
  {\kern 0.180em \vrule height0.78ex width0.020em depth0em \kern-0.20em \rm C}
        }}
\newcommand{\ba}{\begin{array}}
\newcommand{\ea}{\end{array}}
\newcommand{\be}{\begin{equation}}
\newcommand{\ee}{\end{equation}}
\newcommand{\bea}{\begin{eqnarray}}
\newcommand{\eea}{\end{eqnarray}}
\newcommand{\beaa}{\begin{eqnarray*}}
\newcommand{\eeaa}{\end{eqnarray*}}
\def\z{\zeta}
\font\tenmath=msbm10 \font\sevenmath=msbm7 \font\fivemath=msbm5
\def \={{\buildrel {\rm (law)} \over =}}
\newcommand{\basa}{\begin{assumption}}
\newcommand{\easa}{\end{assumption}}
\newcommand{\bas}{\begin{assum}}
\newcommand{\eas}{\end{assum}}
\def\Var{\operatorname{Var}}
\newcommand{\ignore}[1]{}
\begin{document}

\renewcommand{\thefootnote}{\fnsymbol{footnote}}

\renewcommand{\thefootnote}{\fnsymbol{footnote}}

\title{Quadratic variations for Gaussian isotropic random fields on the sphere}
\author{Radomyra Shevchenko \vspace*{0.2in} \\
 Fakult\"at f\"ur Mathematik,
LSIV, TU Dortmund \\
 Vogelpothsweg 87, 44227 Dortmund, Germany \\
\quad radomyra.shevchenko@tu-dortmund.de \vspace*{0.1in}\\
\vspace*{0.1in} 
}

\maketitle

\begin{abstract} 
In this paper we define (empirical) quadratic variations for a Gaussian isotropic random field $f$ on a unit sphere as sums over equidistant increments on one single geodesic line on the surface of the sphere. We prove a noncentral limit theorem for a fixed Fourier component of such a field as well as quantitative central limit theorems in the increasing frequency regime. Based on these results we propose estimators of the angular power spectrum and study their properties. Moreover, we show a quantitative central limit theorem for quadratic variations over the field $f$ and construct an estimator for the Hurst parameter of a $L^2(\mathbb S^2)$-valued fractional Brownian motion.
\end{abstract}





\section{Introduction}
The behaviour of isotropic spherical random fields is a subject that has attracted a lot of attention in recent years, in particular since such fields can be used to model and describe the phenomenon of cosmic microwave background (CMB). One of the most widely considered setups is that of a Gaussian isotropic random field $\{f(x),\, x\in\mathbb S^2\}$ on the unit sphere. It is well-known that for such a field $f$ a spectral decomposition with respect to eigenfunctions of the spherical Laplacian holds almost surely and in $L^2$, namely
$$f(x)=\sum_{\ell=1}^\infty f_\ell(x)=\sum_{\ell=1}^\infty \sum_{m=-\ell}^\ell a_{m\ell}Y_{m\ell}(x), $$
where $Y_{m\ell}$ are the spherical harmonics and the random coefficients $a_{m\ell}$ are complex Gaussian and such that $\mathbb E [a_{m\ell}\overline{a_{m'\ell'}}]=C_\ell\delta_{\ell\ell'}\delta_{mm'}$ (see e.g. \cite{MP}). The sequence $C_\ell$ is known as the angular power spectrum of $f$.

A plethora of recent works in this field is concerned with limit theorems for functionals of Fourier components or related objects with the asymptotics of $\ell$ tending to infinity. Examples include Minkowski functionals, i.e. the area of excursion sets (\cite{MW, T}), the nodal length (\cite{W09, W, MRW, MRT}) and the Euler-Poincar\'e characteristic (\cite{CMW}), and the so-called polyspectra, i.e. spatial integrals over the powers of the different components or their weighted averages known as needlets (\cite{MRW, CM}). The interest in such statistics is fueled by Berry's conjecture relating the high-frequency Fourier components to random waves (known as Berry's random wave model), see the survey \cite{W11} for further details. An important question in view of statistical applications for the CMB is the estimation of the angular power spectrum and its asymptotics for growing frequencies. There are several works dealing with this question (e.g. \cite{FG, LTT}); the ''classical'' unbiased estimator for $C_\ell$ is given by $\frac{1}{2\ell+1}\sum_{m=-\ell}^\ell |a_{m\ell}|^2$ and for a Gaussian random field $f$ the study of its asymptotic properties is straightforward (see \cite{A, MP}).

From the perspective of statistical inference for processes on $\mathbb R^n$ there is a classical tool for estimation related to such quantities as the angular power spectrum, that is to say, to the covariance structure of a process, namely (empirical) quadratic variations, usually having the form $\sum_{i=1}^N (X_{(i+1)\Delta}-X_{i\Delta})^2$ for some process $X$. This tool is widely used for parameter estimation related to self-similar processes (see the monograph \cite{Tu}), but also for statistical inference for stochastic (partial) differential equations (see e.g. the overview \cite{C}). A practical advantage of such a statistics is that it allows to work with discrete observations and is simple to implement and to analyse. From the theoretical point of view, quadratic (or higher order) variations give access to structural changes of processes and (via their connection to the covariance structure) characterise their smoothness. One of the best known examples is the fractional Brownian motion whose quadratic variations famously change their limiting distribution at $H=\frac{3}{4}$ (see e.g. \cite{Tu}).

Quadratic variations of processes on a sphere have not been widely studied. In particular, there is no canonical discretisation. In the paper \cite{I} the author suggests several possible discretisation schemes and analyses empirical quadratic variations of the so-called spherical fractional Brownian motion. Apart from this work there are no studies of this subject known to the author. As to the Fourier components of an isotropic process, they depend on a finite number of random variables, their smoothness is only dependent on the spherical harmonics $Y_{m\ell}$ and the asymptotic properties of the corresponding quadratic variations are largely governed by the behaviour of the spherical harmonics, so the analysis seems to lie outside the realm of probability and be significantly different than for ''rougher'' processes. However, the results in probabilistic terms prove to be not only convenient but also useful.

In the present paper we consider quadratic variations of Gaussian isotropic random fields on the sphere defined with equidistant increments over a single geodesic line between two points and study their asymptotic properties. In particular, we show limit theorems in different asymptotic regimes using Malliavin-Stein calculus as the main tool in our proofs. More precisely, we prove a noncentral limit theorem for quadratic variations of $f_\ell$ for a fixed frequency $\ell$ and a quantitative central limit theorem for growing frequency $\ell\to \infty$. An interesting result is that, while the speed of convergence changes depending on the regime (i.e. how fast $\ell$ grows compared to the discretisation refinement $N$), the third order speed of convergence (i.e. the total variation, Wasserstein and Kolmogorov distances to the standard normal distribution) can be bounded by $\frac{1}{\log N}$ (up to a constant). We use these results to define angular power spectrum estimators and compare them to the classical estimator defined above. In particular, we find out that the rate of convergence of these estimators to the normal distribution in terms of $\ell$ and $N$ is also independent of the regime.

Under certain additional assumptions we also prove a quantitative central limit theorem for quadratic variations of the field $f$ and show how this can be applied for Hurst parameter estimation for a fractional Brownian motion defined over the Hilbert space $L^2(\mathbb S^2)$, which is constructed differently from the field in \cite{I}. 

Our paper is structured as follows: in Sections \ref{background} and \ref{MC} we introduce the main objects and some basic notions and theorems from Malliavin calculus used in this article. In Section \ref{fixed} we focus on the study of asymptotics of quadratic variations for a fixed frequency component when the number of increments tends to infinity. In Section \ref{increasing} we derive asymptotic results in the increasing frequency regime, depending on whether the discretisation increases faster, slower or with the same speed as the frequency. Using results from this section, we derive a quantitative central limit theorem for quadratic variations of the whole field $f$ and discuss an exemplary application in parameter estimation. Finally, in Section \ref{estimation} we introduce several estimators of the angular power spectrum, discuss their asymptotic properties and compare them to the classical estimator.
\section{Background and notation}\label{background}
As mentioned in the introduction, we consider an isotropic Gaussian random field
$$f(x)=\sum_{\ell=1}^\infty f_\ell(x)=\sum_{\ell=1}^\infty \sum_{m=-\ell}^\ell a_{m\ell}Y_{m\ell}(x), $$
where $a_{m\ell}$ are such that $\mathbb E [a_{m\ell}\overline{a_{m'\ell'}}]=C_\ell\delta_{\ell\ell'}\delta_{mm'}$. Note that the angular power spectrum $(C_\ell)_{\ell\in\mathbb N}$ is such that $\sum_{\ell\in\mathbb N} C_\ell \frac{2\ell +1}{4\pi}=\mathbb E f^2<\infty$ and $f_\ell$ is a real Gaussian random field (see \cite{MP}).
We have, moreover, by properties of the spherical harmonics
\[\mathbb E [f_\ell (x)f_\ell (y)]=C_\ell \frac{2\ell +1}{4\pi}P_\ell (\cos d(x,y)),\]
where $d(x,y)$ is the spherical geodesic distance between $x$ and $y$ and $P_\ell$ are the Legendre polynomials defined by Rodrigues' formula
\[P_\ell(t):=\frac{1}{2^\ell \ell !}\frac{d^\ell}{d t^\ell}(t^2-1)^\ell.\]
Our main object of study is the sequence of (empirical) quadratic variations defined along a line $\{f_\ell((0,a))| a\in (0,\,\pi/2)\}$ (where we use spherical coordinates) as
\[V_{N,\,\ell}:=\sum_{i=1}^N \left(f_\ell \left(\frac{i+1}{N}\frac{\pi}{2}\right)- f_\ell \left(\frac{i}{N}\frac{\pi}{2}\right)\right)^2,\]
where the notation $f_\ell (a)$ stands for $f_\ell ((0,a))$. Note that due to isotropy of the process and its Fourier components the zero and the descent direction can be chosen arbitrarily, that is, for statistical applications any line of length $\frac{\pi}{2}$ can be considered. The geodesic distance between two points on such a line is, of course, given by the difference of the second arguments. In a complete analogy, we also define empirical quadratic variations for the whole field $f$ as
\[V_{N}:=\sum_{i=1}^N \left(f \left(\frac{i+1}{N}\frac{\pi}{2}\right)- f \left(\frac{i}{N}\frac{\pi}{2}\right)\right)^2,\]
where the notation $f (a)$ stands for $f ((0,a))$.

In the course of the paper we use the symbol $\sim$ to denote asymptotic equality (i.e. the ratio is tending to one), the symbol $\sim_c$ to denote asymptotic equality up to a constant, and the symbol $\lesssim$ to denote that the left side is asymptotically less or equal to the right side up to a constant (i.e. the ratio is asymptotically bounded by a constant).

\section{Basics of Malliavin calculus}\label{MC}
In this chapter we recapitulate some basic definitions and theorems from Malliavin calculus that will be of use for our study. The main reference for details on those and other results is the monographs \cite{NP} or \cite{N}.

Let $(\mathcal H, \langle\cdot ,\, \cdot \rangle_{\mathcal H})$ be a real separable Hilbert space and $(B(\varphi), \,\varphi\in\mathbb H)$ an isonormal Gaussian process on some probability space $(\Omega, \mathcal F, P)$, that is, a centred Gaussian family such that $\mathbb E [B(\varphi)B(\psi)]=\langle\varphi ,\, \psi \rangle_{\mathcal H}$ for all $\varphi, \, \psi\in\mathcal H$.

In the special case of  $\mathcal{H}=L^2(A,\mathcal{A}, \mu)$ over a Polish space $A$ with the associated $\sigma$-field $\mathcal{A}$ and a positive $\sigma$-finite and non-atomic measure $\mu$ one can define an isonormal Gaussian process with respect to the inner product $\langle g,h\rangle_{\mathcal H}=\int_A g(a)h(a)d\mu (a)$ as the Wiener-It\^o integral
\[W(h)=\int_A h(a)dW(a)\]
with respect to a Gaussian family $W=\{W(B): B\in\mathcal A , \mu(B)<\infty \}$ such that $\mathbb E [W(A)W(B)]=\mu (A\cap B)$ for $A$, $B$ of finite measure.

For a fixed $q\geq 1$ Hermite polynomials are defined as
\begin{equation*}
H_{q}(x)=(-1)^{q} \exp \left( \frac{x^{2}}{2} \right) \frac{{\mathrm{d}}^{q}%
}{{\mathrm{d}x}^{q}}\left( \exp \left(
-\frac{x^{2}}{2}\right)\right),\; x\in \mathbb{R}.
\end{equation*}
Based on this definition one can define orthogonal subspaces of $L^2(\Omega,\mathcal F, P)$ called Wiener chaoses as closed linear spans of the random variables $H_{q}(W(\varphi))$,
$\varphi\in{\mathcal{H}}$ with $\Vert\varphi\Vert_{{\mathcal{H}}}=1$ for a given $q\geq 1$. There exists an isometry between the  Hilbert space ${\mathcal{H}}^{\odot q}$ (meaning the symmetric tensor product) equipped with the norm
$\frac{1}{\sqrt{q!}}\Vert\cdot\Vert_{{\mathcal{H}}^{\otimes q}}$ and the $q$th Wiener chaos. We denote it by $I_q(\cdot)$ and call it the $q$th multiple stochastic integral with respect to $W$.

The orthogonality and isometry properties can be written in the following way: for $p,\;q\geq
1$,\;$f\in{{\mathcal{H}}^{\otimes p}}$ and
$g\in{{\mathcal{H}}^{\otimes q}}$
\begin{equation*}
\mathbb{E}\Big(I_{p}(f) I_{q}(g) \Big)=
\begin{cases}
q! \langle \tilde{f},\tilde{g}
\rangle _{{\mathcal{H}}^{\otimes q}} & \mbox{if}\;p=q,\\
 0 & \mbox{otherwise},
\end{cases}
\end{equation*}
where $\tilde{f} $ denotes the canonical symmetrization of $f$.

We will further make use of the following product formula for multiple integrals: if  $f\in{{\mathcal{H}}^{\odot p}}$ and
$g\in{{\mathcal{H}}^{\odot q}}$, then 
\begin{eqnarray}\label{product}
I_{p}(f) I_{q}(g)&=& \sum_{r=0}^{p \wedge q} r! \binom{p}{r}\binom{q}{r}I_{p+q-2r}\left(f\tilde{\otimes}_{r}g\right).
\end{eqnarray}
Another important property of multiple integrals is hypercontractivity:  if $F=  I_{q}(f) $ with $f \in \mathcal{H} ^{\otimes q}$ for any $q\geq 0$, then
\begin{equation}
\label{hyper}
\mathbf{E}\vert F \vert ^{p} \leq C_{p} \left( \mathbf{E}F ^{2} \right) ^{\frac{p}{2}}.
\end{equation}
for every $p\geq 2$.

Denote by $D$ the Malliavin derivative operator that acts on cylindrical random variables of the form $F=g(W(\varphi
_{1}),\ldots,W(\varphi_{n}))$, where $n\geq 1$,
$g:\mathbb{R}^n\rightarrow\mathbb{R}$ is a smooth function with at most polynomially growing derivatives and $\varphi_{i} \in {{\mathcal{H}}}$. This derivative belongs to $L^2(\Omega,{\mathcal{H}})$ and it is defined as
\begin{equation*}
DF=\sum_{i=1}^{n}\frac{\partial g}{\partial x_{i}}(W(\varphi _{1}),
\ldots , W(\varphi_{n}))\varphi_{i}.
\end{equation*}
The derivative operator can be considered as inverse to the multiple integrals in the sense that for $q\geq 1$ and $f\in\mathcal H^{\odot q}$
\begin{equation}\label{derivM}
DI_q(f)=qI_{q-1}(f).
\end{equation}

Let us now quote the celebrated Fourth moment theorem (given in this form in \cite{NP}) that will  be the main tool for proving central limit theorems in this article.
\begin{theorem}\label{4MT}
Let $(F_N)_{N\in\mathbb N}=(I_q(f_N))_{N\in\mathbb N}$ for a fixed $q\geq 1$ and symmetric $f_N$ be such that
$$\mathbb E [F_N^2]\stackrel{N\to\infty}{\to}\sigma^2 >0.$$
Then $F_N$ converge in law to $Z\sim N(0,\,1)$ if and only if
$$\| DF_N \|_{\mathcal H}^2\stackrel{N\to\infty}{\to}q\sigma^2.$$
Moreover, for $d$ being either  Kolmogorov, total variation or Wasserstein distance, we have for a constant $C$
$$d(F_N, N(0,\,1))\leq C \sqrt{\Var \left(\frac{1}{q}\| DF_N \|_{\mathcal H}^2\right)}.$$
\end{theorem}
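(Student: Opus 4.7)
The plan is to combine a Malliavin integration-by-parts identity with Stein's method, following the Nourdin--Peccati approach. The key ingredient is the formula
\[
\mathbb{E}[F g(F)] = \mathbb{E}\!\left[g'(F)\,\tfrac{1}{q}\|DF\|_{\mathcal H}^{2}\right],
\]
valid for any $F = I_q(f_N)$ in the $q$th Wiener chaos and any smooth $g$ with polynomially bounded derivative. This follows from the general integration-by-parts formula $\mathbb{E}[Fg(F)] = \mathbb{E}[g'(F)\langle DF, -DL^{-1}F\rangle_{\mathcal H}]$ together with the observation that elements of the $q$th chaos are eigenfunctions of the Ornstein--Uhlenbeck generator $L = -\delta D$ with eigenvalue $-q$, so that $-DL^{-1}F = \tfrac{1}{q}DF$ on that chaos.

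Next I would invoke Stein's method. For a test function $h$ in the class associated to the chosen distance (Lipschitz for Wasserstein, indicators of half-lines for Kolmogorov, indicators of Borel sets for total variation), let $g_h$ solve Stein's equation $g'(x) - x g(x) = h(x) - \mathbb{E}[h(Z)]$ with $Z \sim N(0,1)$; classical estimates give $\|g_h'\|_\infty \le C_d$ depending only on the distance $d$. Normalizing so that $\sigma^2 = 1$, substituting $F = F_N$ and applying the key identity yields
\[
\mathbb{E}[h(F_N)] - \mathbb{E}[h(Z)] = \mathbb{E}\!\left[g_h'(F_N)\bigl(1 - \tfrac{1}{q}\|DF_N\|_{\mathcal H}^{2}\bigr)\right].
\]
Since the isometry gives $\mathbb{E}[\tfrac{1}{q}\|DF_N\|_{\mathcal H}^2] = \mathbb{E}[F_N^2] \to 1$, Cauchy--Schwarz bounds the right-hand side by $C_d\sqrt{\Var(\tfrac{1}{q}\|DF_N\|_{\mathcal H}^{2})}$; taking the supremum over $h$ gives the quantitative estimate and hence the ``if'' direction.

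For the converse I would use the Nualart--Peccati contraction identity. By the product formula \eqref{product} one expands
\[
\tfrac{1}{q}\|DF_N\|_{\mathcal H}^{2} = \mathbb{E}[F_N^2] + \sum_{r=1}^{q-1} c_{q,r}\, I_{2q-2r}\!\bigl(f_N \otimes_r f_N\bigr),
\]
with explicit positive coefficients $c_{q,r}$, so by orthogonality of chaoses $\Var(\tfrac{1}{q}\|DF_N\|_{\mathcal H}^2)$ is a non-negative linear combination of the squared symmetrized contraction norms. A parallel expansion of $\mathbb{E}[F_N^4]$ via \eqref{product} shows that $\mathbb{E}[F_N^4] - 3(\mathbb{E}[F_N^2])^2$ is also a non-negative combination of the same contraction norms. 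Hypercontractivity \eqref{hyper} promotes convergence in law $F_N \Rightarrow Z$ to convergence of all moments, so $\mathbb{E}[F_N^4] \to 3\sigma^4$ forces every contraction to vanish and therefore $\Var(\tfrac{1}{q}\|DF_N\|_{\mathcal H}^{2}) \to 0$.

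The most delicate step is the contraction bookkeeping in the last paragraph: aligning the chaos expansions of $\Var(\|DF_N\|_{\mathcal H}^2)$ and of $\mathbb{E}[F_N^4] - 3\mathbb{E}[F_N^2]^2$ into the same non-negative combination of contraction norms requires careful manipulation of the product formula, including all cross-terms and symmetrizations. The ``if'' direction and the quantitative bound are, by comparison, the cleaner half, reducing essentially to a single Malliavin integration by parts together with Stein's bound on $\|g_h'\|_\infty$.
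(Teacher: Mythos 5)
The paper does not prove this theorem at all: it is quoted verbatim from Nourdin--Peccati \cite{NP}, and your sketch is precisely the canonical argument given there (Malliavin integration by parts $-DL^{-1}F=\tfrac1q DF$ on a fixed chaos, Stein's equation plus Cauchy--Schwarz for the quantitative bound, and the product-formula/contraction bookkeeping with hypercontractivity for the converse). Your outline is correct and matches the cited source's approach; the only points to watch in a full write-up are the non-smoothness of the Stein solutions for the Kolmogorov and total variation test classes, and the fact that the fourth cumulant dominates (rather than literally equals) the non-negative combination of symmetrized contraction norms appearing in $\Var\bigl(\tfrac1q\|DF_N\|_{\mathcal H}^2\bigr)$.
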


Now consider specifically the Hilbert space $\mathcal H :=L^2(\mathbb S^2,  d\sigma(x))$, where $d\sigma(x)$ denotes the Lebesgue measure on the sphere. This space can be associated with an isonormal Gaussian process $W$ in the manner indicated above such that $f_\ell (x)$ for any $x\in\mathbb S^2$ can be seen as a single integral over $W$, namely as
\[f_\ell (x)=\int \sqrt{C_\ell}\frac{2\ell +1}{4\pi}P_\ell (\langle x,\, y\rangle)dW(y).\]

The last result of this section is a noncentral limit theorem that is known in the literature for second Wiener chaos random variables over $L^2(\mathbb R^n)$, but whose proof can be easily exended for our Hilbert space of interest, $L^2(\mathbb S^2, d\sigma (x))$.
\begin{theorem}\label{NCLT}
Let $(F_N)_{N\in\mathbb N}$ be a normalised sequence in the second Wiener chaos over $L^2(\mathbb S^2, d\sigma (x))$, i.e. $F_N=I_2(f_N)$ with symmetric $f_N$ of norm one. Then its cumulants of order $p\geq 2$ are given by the formula
\[\kappa_p(F_N)=2^{p-1}(p-1)!\int_{(\mathbb S^2)^p}f_N(x_1,x_2)\dots f_N(x_{p-1},x_p)f_N(x_p,x_1)d\sigma(x_1)\dots d\sigma(x_p).\]
Assume that all cumulants converge for $N\to\infty$. Then the sequence $F_N$ converges weakly to a random variable $F$ characterised by its cumulants $\kappa_p(F)=\lim_{N\to\infty}\kappa_p(F_N)$, $\kappa_1(F)=0$ and $\kappa_2(F)=1$.
\end{theorem}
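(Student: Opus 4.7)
The plan is to diagonalise the Hilbert--Schmidt operator associated with the kernel $f_N$, read off the cumulants by independence, and then lift the convergence of cumulants to weak convergence via hypercontractivity.

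First, I would invoke the spectral theorem. Associate to $f_N\in\mathcal{H}^{\odot 2}$ with $\mathcal{H}=L^2(\mathbb S^2,d\sigma)$ the symmetric Hilbert--Schmidt operator $A_N:\mathcal{H}\to\mathcal{H}$ given by $(A_Ng)(x)=\int_{\mathbb S^2}f_N(x,y)g(y)d\sigma(y)$. Symmetry of $f_N$ makes $A_N$ self-adjoint, so there exist real eigenvalues $(\lambda_k^{(N)})_k$ and an orthonormal basis of eigenfunctions $(e_k^{(N)})_k$ with $f_N=\sum_k\lambda_k^{(N)}e_k^{(N)}\otimes e_k^{(N)}$ and $\sum_k(\lambda_k^{(N)})^2=\|f_N\|_{\mathcal{H}^{\otimes 2}}^2=1$. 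Applying $I_2$ to this expansion and using $I_2(e\otimes e)=H_2(W(e))=W(e)^2-1$ for unit $e$ yields
\[F_N=\sum_k\lambda_k^{(N)}\bigl((Z_k^{(N)})^2-1\bigr),\qquad Z_k^{(N)}:=W(e_k^{(N)})\;\text{i.i.d.}\;N(0,1).\]

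Next, I would derive the cumulant formula. A direct logarithmic expansion of the chi-square characteristic function $\mathbb E[e^{it(Z^2-1)}]=e^{-it}(1-2it)^{-1/2}$ produces $\kappa_p(Z^2-1)=2^{p-1}(p-1)!$ for $p\geq 2$. Independence of the $Z_k^{(N)}$ and the scaling/additivity of cumulants then give
\[\kappa_p(F_N)=2^{p-1}(p-1)!\sum_k(\lambda_k^{(N)})^p=2^{p-1}(p-1)!\,\operatorname{Tr}(A_N^p).\]
Since the kernel of $A_N^p$ is $\int f_N(x_1,x_2)f_N(x_2,x_3)\cdots f_N(x_{p-1},x_p)d\sigma(x_2)\cdots d\sigma(x_{p-1})$, a final integration along the diagonal reproduces the integral formula claimed in the statement.

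For the weak convergence part, hypercontractivity (\ref{hyper}) gives $\mathbb E|F_N|^p\leq C_p(\mathbb E F_N^2)^{p/2}\leq C_p 2^{p/2}$, so $(F_N)_N$ is tight and each subsequential weak limit $F$ inherits the uniform moment bound. Convergence of all cumulants forces convergence of all moments through the classical Bell polynomial relations, and the bound $\mathbb E|F|^{2p}\leq C_p 2^p$ is more than enough to verify Carleman's condition. Hence the moment sequence determines $F$ uniquely, all subsequential limits coincide, and the full sequence converges weakly to the announced $F$.

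The main obstacle is the passage from convergence of cumulants to genuine weak convergence: without a uniform moment bound one could construct distinct limits sharing the same cumulant sequence. Hypercontractivity is exactly what makes this step work in the second Wiener chaos, and it is the reason the result extends transparently from $L^2(\mathbb R^n)$ to $L^2(\mathbb S^2,d\sigma)$, since the argument only uses the abstract Hilbert space structure and the spectral decomposition of Hilbert--Schmidt operators.
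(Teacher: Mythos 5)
Your proof is correct and follows essentially the same route as the paper, which simply delegates the two steps to references: the cumulant formula via the spectral decomposition $F_N=\sum_k\lambda_k^{(N)}((Z_k^{(N)})^2-1)$ and the trace identity is precisely the content of the cited Proposition 7.2, and the passage from cumulant convergence to weak convergence via hypercontractivity, uniform moment bounds and a determinate moment problem is exactly the cited second step. The only caveat is one inherited from the statement itself rather than from your argument: with $\|f_N\|_{\mathcal H^{\otimes 2}}=1$ the formula gives $\kappa_2(F_N)=2$, so the normalisation yielding $\kappa_2(F)=1$ implicitly means $\mathbb E F_N^2=1$ (i.e. $\|f_N\|=1/\sqrt2$), which does not affect the validity of your reasoning.
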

\begin{proof}
The first part of the proof is a completely analogous adaptation of the proof of Proposition 7.2 in \cite{NfBm}, while the second part follows by the hypercontractivity property of multiple Wiener integrals (cf. Corollary 2.8.14 in \cite{NP}) and a limiting argument for moments analogous to the argument of Theorem 5 in \cite{SST}.
\end{proof}
\section{Regime $N\to\infty$ for a fixed frequency $\ell$}\label{fixed}
In this chapter we study the behaviour of quadratic variations $V_{N,\ell}$ as $N$ tends to infinity for a fixed frequency $\ell$. The first step in establishing a limit theorem is determining a proper normalisation, that is, computing its expectation and variance.

We start thus by calculating the expectation:
\begin{eqnarray*}
&&\mathbb E [V_{N,\ell}]=\mathbb E \left[\sum_{i=1}^N \left(f_\ell \left(\frac{i+1}{N}\frac{\pi}{2}\right)- f_\ell \left(\frac{i}{N}\frac{\pi}{2}\right)\right)^2\right]  \\
&&=\sum_{i=1}^N \left(\mathbb E \left[f_\ell \left(\frac{i+1}{N}\frac{\pi}{2}\right)^2\right]+ \mathbb E \left[f_\ell \left(\frac{i}{N}\frac{\pi}{2}\right)^2\right]-2\mathbb E \left[f_\ell \left(\frac{i+1}{N}\frac{\pi}{2}\right)(f_\ell \left(\frac{i}{N}\frac{\pi}{2}\right)\right]\right)\\
&&=\sum_{i=1}^N 2 C_\ell \frac{2\ell + 1}{4\pi}(P_\ell (1)-P_{\ell}\left(\cos \frac{\pi}{2N}\right))\\
&&\sim 2N  C_\ell \frac{2\ell + 1}{4\pi} \left(\frac{\pi}{2 N}\right)^2\frac{\ell (\ell +1)}{2}\\
&& = \frac{\pi}{16}(2\ell +1)\ell (\ell +1)C_\ell \frac{1}{N},
\end{eqnarray*}
where the approximation step is due to Taylor's approximation, since $P'_\ell(1)=\frac{\ell (\ell +1)}{2}$.\\

Now let us move on to variance. First, recall that we have
\begin{eqnarray*}
\Var(\sum_{i=1}^NX_i^2)=\sum_{i,j=1}^N \mathbb E[X_i^2X_j^2]-(\sum_{i=1}^N\mathbb E [X_i^2])^2=2\sum_{i,j=1}^N\mathbb E [X_iX_j]^2 
\end{eqnarray*}
for Gaussian $X_i$. Thus, we need to compute
\[\sum_{i,j=1}^N \mathbb E \left[ \left(f_\ell \left(\frac{i+1}{N}\frac{\pi}{2}\right)- f_\ell \left(\frac{i}{N}\frac{\pi}{2}\right)\right) \left(f_\ell \left(\frac{j+1}{N}\frac{\pi}{2}\right)- f_\ell \left(\frac{j}{N}\frac{\pi}{2}\right)\right)\right]^2.\]
We have
\begin{eqnarray*}
&&\mathbb E \left[ \left(f_\ell \left(\frac{i+1}{N}\frac{\pi}{2}\right)- f_\ell \left(\frac{i}{N}\frac{\pi}{2}\right)\right) \left(f_\ell \left(\frac{j+1}{N}\frac{\pi}{2}\right)- f_\ell \left(\frac{j}{N}\frac{\pi}{2}\right)\right)\right]\\
&&=C_\ell \frac{2\ell +1}{4\pi}\left(2 P_\ell \left(\cos \frac{|i-j|}{N}\frac{\pi}{2}\right)-  P_\ell \left(\cos \frac{|i-j-1|}{N}\frac{\pi}{2}\right) -  P_\ell \left(\cos \frac{|i-j+1|}{N}\frac{\pi}{2}\right)\right),
\end{eqnarray*}
and hence,
\begin{eqnarray*}
&&\sum_{i,j=1}^N \mathbb E \left[ \left(f_\ell \left(\frac{i+1}{N}\frac{\pi}{2}\right)- f_\ell \left(\frac{i}{N}\frac{\pi}{2}\right)\right) \left(f_\ell \left(\frac{j+1}{N}\frac{\pi}{2}\right)- f_\ell \left(\frac{j}{N}\frac{\pi}{2}\right)\right)\right]^2\\
&&=C_\ell^2 \left(\frac{2\ell +1}{4\pi}\right)^2 \sum_{k=1}^N (N-k) \left(2 P_\ell \left(\cos \frac{k}{N}\frac{\pi}{2}\right)-  P_\ell \left(\cos \frac{k-1}{N}\frac{\pi}{2}\right) -  P_\ell \left(\cos \frac{k+1}{N}\frac{\pi}{2}\right)\right)^2.
\end{eqnarray*}
For $N\to \infty$ we can write
\begin{eqnarray}
&& 2 P_\ell \left(\cos \frac{k}{N}\frac{\pi}{2}\right)-  P_\ell \left(\cos \frac{k-1}{N}\frac{\pi}{2}\right) -  P_\ell \left(\cos \frac{k+1}{N}\frac{\pi}{2}\right)\label{approx}\\
&& \sim - \frac{1}{N^2}\frac{d^2}{dx^2}P_\ell(\cos (x\frac{\pi}{2}))\Big|_{\frac{k}{N}} = \frac{1}{N^2}\frac{\pi^2}{4} \left(\ell(\ell+1)P_\ell\left(\cos\frac{k}{N}\frac{\pi}{2} \right)- P'_\ell\left(\cos \frac{k}{N}\frac{\pi}{2}\right) \cos \left(\frac{k}{N}\frac{\pi}{2}\right)\right)\nonumber
\end{eqnarray}
and obtain
\begin{eqnarray*}
&&\sum_{i,j=1}^N \mathbb E \left[ \left(f_\ell \left(\frac{i+1}{N}\frac{\pi}{2}\right)- f_\ell \left(\frac{i}{N}\frac{\pi}{2}\right)\right) \left(f_\ell \left(\frac{j+1}{N}\frac{\pi}{2}\right)- f_\ell \left(\frac{j}{N}\frac{\pi}{2}\right)\right)\right]^2\\
&&\sim C_\ell^2 \left(\frac{2\ell +1}{4\pi}\right)^2\frac{\pi^4}{16N^4} \sum_{k=1}^N (N-k)  \left(\ell(\ell+1)P_\ell\left(\cos\frac{k}{N}\frac{\pi}{2} \right)- P'_\ell\left(\cos \frac{k}{N}\frac{\pi}{2}\right) \cos \left(\frac{k}{N}\frac{\pi}{2}\right)\right)^2\\
&&\sim  C_\ell^2 \left(\frac{2\ell +1}{4\pi}\right)^2\frac{\pi^4}{16N^4} N^2 \sum_{k=1}^N \left(\ell(\ell+1)P_\ell\left(\cos\frac{k}{N}\frac{\pi}{2} \right)- P'_\ell\left(\cos \frac{k}{N}\frac{\pi}{2}\right) \cos \left(\frac{k}{N}\frac{\pi}{2}\right)\right)^2 \frac{1}{N}\\
&&\sim  C_\ell^2 \left(\frac{2\ell +1}{4\pi}\right)^2\frac{\pi^4}{16N^4} N^2 \int_0^1  \left(\ell(\ell+1)P_\ell\left(\cos x \frac{\pi}{2} \right)- P'_\ell\left(\cos x\frac{\pi}{2}\right) \cos \left(x\frac{\pi}{2}\right)\right)^2 dx\\
&&\sim K_\ell C_\ell^2 \frac{1}{N^2},
\end{eqnarray*}
where $K_\ell$ is a (known) constant depending on $\ell$.

The sequence $(V_{N,\ell})_{N\in\mathbb N}$ does not obey a central limit theorem even after a proper normalisation. Instead, we can prove the following noncentral limit theorem.
\begin{theorem}\label{NCLTSphere}
The sequence  $$F_{N}:=\frac{V_{N,\ell}-\mathbb E V_{N,\ell}}{\sqrt{\Var(V_{N,\ell})}}$$ converges in distribution to a random variable $F$ with cumulants
\begin{eqnarray*}
&&\kappa_p(F)=\frac{2^{p-1}(p-1)!}{2^{p/2}\left(\int_0^1  \left(\ell(\ell+1)P_\ell\left(\cos x \frac{\pi}{2} \right)- P'_\ell\left(\cos x\frac{\pi}{2}\right) \cos \left(x\frac{\pi}{2}\right)\right)^2 dx\right)^{p/2}}\\
&&\times \int_{[0,1]^p}\left(\ell(\ell+1)P_\ell\left(\cos (|x_1-x_2|\frac{\pi}{2}) \right)- P'_\ell\left(\cos (|x_1-x_2|\frac{\pi}{2})\right) \cos \left(|x_1-x_2|\frac{\pi}{2}\right)\right)\times\\
&&\qquad \dots\times  \left(\ell(\ell+1)P_\ell\left(\cos(|x_p-x_1|\frac{\pi}{2}) \right)- P'_\ell\left(\cos (|x_p-x_1|\frac{\pi}{2})\right) \cos \left(|x_p-x_1|\frac{\pi}{2}\right)\right)dx_1\dots dx_p
\end{eqnarray*}
for $p > 3$, $\kappa_1(F)=0$ and $\kappa_2(F)=1$.
\end{theorem}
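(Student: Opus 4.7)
The plan is to rewrite $F_N$ as a second Wiener chaos random variable, apply the cumulant formula from Theorem \ref{NCLT}, and show that every cumulant converges to the value claimed, so that the second conclusion of Theorem \ref{NCLT} delivers the convergence in law to a random variable with the prescribed cumulants.

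Since $f_\ell(x) = I_1(g_x)$ for an explicit $g_x \in \mathcal H$ coming from the spectral representation recalled in Section \ref{background}, each increment
\[X_i := f_\ell\!\Big(\tfrac{i+1}{N}\tfrac{\pi}{2}\Big) - f_\ell\!\Big(\tfrac{i}{N}\tfrac{\pi}{2}\Big)\]
is a first chaos element, $X_i = I_1(g_i^N)$, and the product formula (\ref{product}) gives $X_i^2 = I_2(g_i^N \otimes g_i^N) + \|g_i^N\|_{\mathcal H}^2$. Summing over $i$ yields
\[V_{N,\ell} - \mathbb E V_{N,\ell} = I_2(\tilde f_N), \qquad \tilde f_N := \sum_{i=1}^N g_i^N \otimes g_i^N,\]
so that $F_N = I_2(\tilde f_N / \sqrt{\Var(V_{N,\ell})})$ is a normalised element of the second chaos to which Theorem \ref{NCLT} applies.

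Next, plugging $\tilde f_N$ into the cumulant formula and using $\langle g_i^N, g_j^N\rangle_{\mathcal H} = \mathbb E[X_iX_j]$, every sphere integration collapses into one inner product, producing the cyclic sum
\[\kappa_p(F_N) = \frac{2^{p-1}(p-1)!}{\Var(V_{N,\ell})^{p/2}} \sum_{i_1,\ldots,i_p=1}^{N} \mathbb E[X_{i_1}X_{i_2}]\,\mathbb E[X_{i_2}X_{i_3}]\cdots \mathbb E[X_{i_p}X_{i_1}].\]
Introducing $h_\ell(t) := \ell(\ell+1)P_\ell(\cos(t\pi/2)) - P'_\ell(\cos(t\pi/2))\cos(t\pi/2)$, the Taylor expansion (\ref{approx}) already used in the variance computation gives, uniformly in $i,j$,
\[\mathbb E[X_iX_j] \sim C_\ell\,\tfrac{2\ell+1}{4\pi}\cdot\tfrac{\pi^2}{4N^2}\,h_\ell\!\left(\tfrac{|i-j|}{N}\right).\]
Recognising $N^{-p}\sum_{i_1,\ldots,i_p}\prod_{k} h_\ell(|i_k-i_{k+1}|/N)$ as a Riemann sum on $[0,1]^p$ of the continuous integrand $\prod_k h_\ell(|x_k-x_{k+1}|)$, and inserting the variance asymptotics $\Var(V_{N,\ell}) \sim 2K_\ell C_\ell^2/N^2$ with $K_\ell = (\tfrac{2\ell+1}{4\pi})^2\tfrac{\pi^4}{16}\int_0^1 h_\ell^2$, the factors of $C_\ell$, $(2\ell+1)/(4\pi)$, $\pi^2/4$ and $N^{-p}$ cancel exactly. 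What remains is the prefactor $2^{p-1}(p-1)!/2^{p/2}$ times the ratio $\int_{[0,1]^p}\prod_k h_\ell(|x_k-x_{k+1}|)/(\int_0^1 h_\ell^2)^{p/2}$, which is precisely the expression for $\kappa_p(F)$ stated in the theorem.

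With all cumulants converging, the second part of Theorem \ref{NCLT} yields the convergence of $F_N$ in distribution to the claimed $F$; the identities $\kappa_1(F) = 0$ and $\kappa_2(F) = 1$ hold by centering and normalisation. The only delicate step is the uniform control of the remainder in the Taylor expansion of $P_\ell \circ \cos$ that is needed to turn the cyclic cumulant sum into a Riemann integral. Since $\ell$ is fixed and the argument lies in a compact interval, the relevant second derivative is bounded, the pointwise remainder is $O(N^{-3})$, and after multiplying by the $N^p$ summands and dividing by $\Var(V_{N,\ell})^{p/2} \sim N^{-p}$ the error is $O(N^{-1})$ and vanishes; this bookkeeping, together with the standard Riemann sum convergence for continuous integrands, is the only quantitative input and presents no conceptual difficulty beyond what is already carried out in the variance calculation.
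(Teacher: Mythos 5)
Your proposal is correct and follows essentially the same route as the paper: represent the centred quadratic variation as a second chaos element $I_2\bigl(\sum_i (h^i_N)^{\otimes 2}\bigr)$, collapse the sphere integrals in the cumulant formula of Theorem \ref{NCLT} into the cyclic product of covariances $\mathbb E[X_{i_1}X_{i_2}]\cdots\mathbb E[X_{i_p}X_{i_1}]$, apply the Taylor approximation \eqref{approx} together with the variance asymptotics, and pass to the limiting Riemann integral. The only difference is that you make explicit the uniform control of the Taylor remainder needed for the Riemann-sum step, which the paper leaves implicit.
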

\begin{proof}
 Recall that we have
\[f_\ell(x)=\int \sqrt{C_\ell}\frac{2\ell+1}{4\pi}P_\ell (\langle x,\,y\rangle)dW(y)\]
and thus
\[f_\ell \left(\frac{i+1}{N}\frac{\pi}{2}\right)- f_\ell \left(\frac{i}{N}\frac{\pi}{2}\right)=I_1\left(\sqrt{C_\ell}\frac{2\ell+1}{4\pi} \left(P_\ell (\langle \frac{i+1}{N}\frac{\pi}{2},\cdot \rangle)-P_\ell (\langle \frac{i}{N}\frac{\pi}{2},\cdot \rangle) \right)\right)=:I_1(h^i_N(\cdot)).\]
Consequently, $F_N=I_2(g_N)$, where
$$g_N=\frac{1}{\sqrt{\Var(V_{N,\ell})}}\sum_{i=1}^N (h^i_N)^{\otimes 2}.$$
By Theorem \ref{NCLT} it suffices to show convergence of cumulants of $F_N$ of order higher than $2$. We calculate
\begin{eqnarray*}
&& \kappa_p(F_N)=2^{p-1}(p-1)!\int_{(\mathbb S^2)^p}g_N(x_1,x_2)\dots g_N(x_{p-1},x_p)g_N(x_p,x_1)d\sigma(x_1)\dots d\sigma(x_p)\\
&& = \frac{2^{p-1}(p-1)!}{\Var (V_{N,\ell})^{p/2}}\sum_{i_1,\dots , i_p=1}^N \int_{\mathbb S^2}h^{i_1}_N(x) h^{i_2}_N(x)d\sigma (x)\dots  \int_{\mathbb S^2}h^{i_{p-1}}_N(x) h^{i_p}_N(x)d\sigma (x) \int_{\mathbb S^2}h^{i_p}_N(x) h^{i_1}_N(x)d\sigma (x)\\
&&=\frac{2^{p-1}(p-1)!}{\Var (V_{N,\ell})^{p/2}}\sum_{i_1,\dots , i_p=1}^N \mathbb E [I_1(h^{i_1}_N)I_1(h^{i_2}_N)]\dots  \mathbb E [I_1(h^{i_{p-1}}_N)I_1(h^{i_p}_N)] \mathbb E [I_1(h^{i_p}_N)I_1(h^{i_1}_N)]\\
&&\sim \frac{2^{p-1}(p-1)! N^p}{(2 K_\ell)^{p/2}C_\ell^p}\sum_{i_1,\dots , i_p=1}^N \mathbb E [I_1(h^{i_1}_N)I_1(h^{i_2}_N)]\dots  \mathbb E [I_1(h^{i_{p-1}}_N)I_1(h^{i_p}_N)] \mathbb E [I_1(h^{i_p}_N)I_1(h^{i_1}_N)],
\end{eqnarray*}
inserting the asymptotic expression for variance. Using the approximations for $ \mathbb E [I_1(h^{i}_N)I_1(h^{j}_N)]$ computed in \eqref{approx} we can write
\begin{eqnarray*}
&& \kappa_p(F_N)\sim \frac{2^{p-1}(p-1)! N^p}{(2 K_\ell)^{p/2}C_\ell^p}C_\ell^p \left(\frac{2l+1}{4\pi}\right)^p\\
&& \qquad\sum_{i_1,\dots , i_p=1}^N\left(2 P_\ell \left(\cos \frac{|i_1-i_2|}{N}\frac{\pi}{2}\right)-  P_\ell \left(\cos \frac{|i_1-i_2-1|}{N}\frac{\pi}{2}\right) -  P_\ell \left(\cos \frac{|i_1-i_2+1|}{N}\frac{\pi}{2}\right)\right)\times\\
&&\qquad \dots \times \left(2 P_\ell \left(\cos \frac{|i_p-i_1|}{N}\frac{\pi}{2}\right)-  P_\ell \left(\cos \frac{|i_p-i_1-1|}{N}\frac{\pi}{2}\right) -  P_\ell \left(\cos \frac{|i_p-i_1+1|}{N}\frac{\pi}{2}\right)\right)\\
&&\sim \frac{2^{p-1}(p-1)! N^p}{(2 K_\ell)^{p/2}C_\ell^p}C_\ell^p \left(\frac{2l+1}{4\pi}\right)^p\frac{1}{N^{2p}}\frac{\pi^{2p}}{4^p}\\
&&\qquad \sum_{i_1,\dots , i_p=1}^N \left(\ell(\ell+1)P_\ell\left(\cos\frac{|i_1-i_2|}{N}\frac{\pi}{2} \right)- P'_\ell\left(\cos \frac{|i_1-i_2|}{N}\frac{\pi}{2}\right) \cos \left(\frac{|i_1-i_2|}{N}\frac{\pi}{2}\right)\right)\times\\
&&\qquad \dots\times  \left(\ell(\ell+1)P_\ell\left(\cos\frac{|i_p-i_1|}{N}\frac{\pi}{2} \right)- P'_\ell\left(\cos \frac{|i_p-i_1|}{N}\frac{\pi}{2}\right) \cos \left(\frac{|i_p-i_1|}{N}\frac{\pi}{2}\right)\right).
\end{eqnarray*}
Plugging in the expression for $K_\ell$, we obtain that for $N\to\infty$ this expression tends to
\begin{eqnarray*}
&&\frac{2^{p-1}(p-1)!}{2^{p/2}\left(\int_0^1  \left(\ell(\ell+1)P_\ell\left(\cos x \frac{\pi}{2} \right)- P'_\ell\left(\cos x\frac{\pi}{2}\right) \cos \left(x\frac{\pi}{2}\right)\right)^2 dx\right)^{p/2}}\\
&&\times \int_{[0,1]^p}\left(\ell(\ell+1)P_\ell\left(\cos (|x_1-x_2|\frac{\pi}{2}) \right)- P'_\ell\left(\cos (|x_1-x_2|\frac{\pi}{2})\right) \cos \left(|x_1-x_2|\frac{\pi}{2}\right)\right)\times\\
&&\qquad \dots\times  \left(\ell(\ell+1)P_\ell\left(\cos(|x_p-x_1|\frac{\pi}{2}) \right)- P'_\ell\left(\cos (|x_p-x_1|\frac{\pi}{2})\right) \cos \left(|x_p-x_1|\frac{\pi}{2}\right)\right)dx_1\dots dx_p,
\end{eqnarray*}
which is obviously finite.
\end{proof}

\section{Regime $\ell\to\infty$ (increasing frequency)}\label{increasing}
The situation changes drastically if $\ell$ also tends to infinity. In particular, in this chapter we will show a CLT for $\ell = \ell(N)\to \infty$ ad $N\to\infty$. A remarkable fact is that the normalisation depends on the relative speed of convergence of $\ell$ with respect to $N$; one can distinguish three different regimes.

Since $\ell$ is not a constant anymore, we cannot proceed in the same way as before, or at least need different arguments to do so. Let us instead recall some standard asymptotic results of Hilb's type for Legendre polynomials mentioned in \cite{CM} and in \cite{W} respectively.
\begin{prop}\label{P-asympt}
For $0<C<\psi <N\frac{\pi}{2}$ the following expansion holds:
\begin{eqnarray*}
	P_\ell \left(\cos\frac{\psi}{N}\right)&=&\sqrt{\frac{\psi / N}{\sin\frac{\psi}{N}}}J_0 \left(\left(\ell + \frac{1}{2}\right)\frac{\psi}{N}\right)+o(\ell^{-3/2}\sqrt{\psi/N}),
\end{eqnarray*}
where $J_0$ is a Bessel function. If additionally $\ell = \ell(N)$ is such that $\ell/N \to \infty$ or $\ell /N \to c\neq 0$ for $N\to\infty$, then we can use the asymptotics $J_0(x)\sim \frac{1}{\sqrt x}$ to derive
\begin{eqnarray*}
	P_\ell \left(\cos\frac{\psi}{N}\right)&=&\sqrt{\frac{2}{\pi \ell \sin\frac{\psi}{N}}}\left(\sin\left(\psi + \frac{\pi}{4}\right)+O\left(\frac{1}{\psi}\right)\right).
\end{eqnarray*}
\end{prop}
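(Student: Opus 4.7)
My plan is to derive the first expansion from Hilb's classical asymptotic formula for Legendre polynomials and then obtain the second expansion by inserting the large-argument asymptotics of the Bessel function $J_0$.

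For the first expansion, I would invoke Hilb's formula in its standard form (as given, for instance, in Szeg\H{o}'s monograph on orthogonal polynomials, Theorem 8.21.6): for $0<\theta<\pi-\epsilon$ and $\ell\ge 1$,
$$P_\ell(\cos\theta)=\sqrt{\frac{\theta}{\sin\theta}}\,J_0\!\left(\left(\ell+\frac{1}{2}\right)\theta\right)+o\!\left(\ell^{-3/2}\sqrt{\theta}\right),$$
uniformly on the relevant range. The underlying derivation proceeds either from the Laplace integral representation
$$P_\ell(\cos\theta)=\frac{1}{\pi}\int_0^\pi(\cos\theta+i\sin\theta\cos\phi)^\ell\,d\phi,$$
combined with a stationary-phase analysis near $\phi=\pi/2$, or via a Liouville--Green/WKB transformation of the Legendre ODE $(1-x^2)P_\ell''-2xP_\ell'+\ell(\ell+1)P_\ell=0$, which under the substitution $P_\ell=(\sin\theta)^{-1/2}u$ reduces to a Bessel equation of order zero in the variable $(\ell+\tfrac{1}{2})\theta$. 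Setting $\theta=\psi/N$ with $0<C<\psi<N\pi/2$ then yields exactly the displayed expansion.

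For the second expansion, I would apply the classical large-argument asymptotics
$$J_0(z)=\sqrt{\frac{2}{\pi z}}\,\cos\!\left(z-\frac{\pi}{4}\right)+O(z^{-3/2}),\qquad z\to\infty.$$
Under either hypothesis $\ell/N\to\infty$ or $\ell/N\to c\neq 0$, combined with $\psi>C$, the argument $z=(\ell+\tfrac{1}{2})\psi/N$ diverges uniformly in $\psi$, so this expansion is legitimate. Substituting into Hilb's formula and using $\cos(u-\pi/4)=\sin(u+\pi/4)$, the leading contribution becomes $\sqrt{2/(\pi\ell\sin(\psi/N))}\,\sin((\ell+\tfrac{1}{2})\psi/N+\pi/4)$, while the remainder from $J_0$ and the Hilb error can be absorbed into the $O(1/\psi)$ correction.

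The main obstacle, and really the only nontrivial bookkeeping, is verifying that combining the $o(\ell^{-3/2}\sqrt{\psi/N})$ Hilb remainder with the Bessel error produces a correction that is genuinely $O(1/\psi)$ relative to the leading oscillatory prefactor, uniformly over the admissible range of $\psi$. This follows from the equivalence $\sin(\psi/N)\asymp\psi/N$ when $\psi/N$ is bounded above, together with uniform positivity of $\sin(\psi/N)$ guaranteed by the constraints $\psi>C$ and $\psi<N\pi/2$. Because both expansions are classical and thoroughly documented in \cite{CM} and \cite{W}, the cleanest route in the paper is simply to cite them rather than reproduce the full asymptotic analysis.
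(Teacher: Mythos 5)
Your proposal is correct and coincides with what the paper does: the paper offers no proof of this proposition at all, presenting it as a recalled standard result cited from the literature, and the route you sketch (Hilb's formula as in Szeg\H{o}'s Theorem 8.21.6 with $\theta=\psi/N$, followed by the large-argument Bessel asymptotics and the identity $\cos(u-\tfrac{\pi}{4})=\sin(u+\tfrac{\pi}{4})$) is precisely the intended classical justification. Your bookkeeping of the error terms, using $\sin(\psi/N)\asymp\psi/N$ and the hypothesis $N\lesssim\ell$, is sound and if anything slightly more careful than the paper's statement, which writes $\sin(\psi+\tfrac{\pi}{4})$ where the argument is really $(\ell+\tfrac12)\psi/N+\tfrac{\pi}{4}$.
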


As in the previous chapter, we start by computing the expectation:
\begin{eqnarray*}
&&\mathbb E \left[\sum_{i=1}^N \left(f_\ell \left(\frac{i+1}{N}\frac{\pi}{2}\right)- f_\ell \left(\frac{i}{N}\frac{\pi}{2}\right)\right)^2\right]  \\
&&=\sum_{i=1}^N \left(\mathbb E \left[f_\ell \left(\frac{i+1}{N}\frac{\pi}{2}\right)^2\right]+ \mathbb E \left[f_\ell \left(\frac{i}{N}\frac{\pi}{2}\right)^2\right]-2\mathbb E \left[f_\ell \left(\frac{i+1}{N}\frac{\pi}{2}\right)(f_\ell \left(\frac{i}{N}\frac{\pi}{2}\right)\right]\right)\\
&&=\sum_{i=1}^N 2 C_\ell \frac{2\ell + 1}{4\pi}(P_\ell (1)-P_{\ell}\left(\cos \frac{\pi}{2N}\right))\\
&&=2N C_\ell \frac{2\ell + 1}{4\pi}(1-P_{\ell}\left(\cos\frac{\pi}{2N}\right))\\
&&\sim \begin{cases} 2 N C_\ell  \frac{2\ell + 1}{4\pi} & \text{ if } N/\ell(N)\to 0, \\ 2 N C_\ell  \frac{2\ell + 1}{4\pi}(1-J_0(\frac{\pi}{2}c))& \text{ if } \ell(N)/ N\to c,\\   2 N C_\ell  \frac{2\ell + 1}{4\pi} \frac{\pi^2}{8}\frac{\ell^2}{N^2}& \text{ if } \ell(N)/N\to 0.\end{cases}
\end{eqnarray*}
The first asymptotic equality is readily seen using the estimate 22.14.9 from \cite{AS}:
\[\left|P_\ell \left(\cos \left(\frac{\pi}{2N}\right)\right)\right|\leq\sqrt{\frac{2}{\pi \ell}}\frac{1}{\sqrt[4]{1-\cos^2\left(\frac{\pi}{2N}\right)}}=\sqrt{\frac{2}{\pi \ell \sin\left(\frac{\pi}{2N}\right)}}\stackrel{N\to\infty}{\to}0.\]
The asymptotics for $\ell/N\to c$ is given by the Mehler-Rayleigh formula (see e.g. \cite{BT}). Finally, the asymptotics in case $\ell /N\to 0$ is derived by Taylor's approximation applied to $J_0$, since 
\begin{eqnarray*}
	P_\ell \left(\cos\frac{\pi}{2N}\right)&\sim&J_0 \left(\ell \frac{\pi}{2N}\right)
\end{eqnarray*}
by Proposition \ref{P-asympt}.

We proceed the by determining the variance. Recall the formula
\begin{eqnarray*}
\Var(\sum_{i=1}^NX_i^2)=2\sum_{i,j=1}^N\mathbb E [X_iX_j]^2 
\end{eqnarray*}
for Gaussian $X_i$. Thus, as in the previous chapter, we need to calculate
\[\sum_{i,j=1}^N \mathbb E \left[ \left(f_\ell \left(\frac{i+1}{N}\frac{\pi}{2}\right)- f_\ell \left(\frac{i}{N}\frac{\pi}{2}\right)\right) \left(f_\ell \left(\frac{j+1}{N}\frac{\pi}{2}\right)- f_\ell \left(\frac{j}{N}\frac{\pi}{2}\right)\right)\right]^2.\]
We have
\begin{eqnarray*}
&&\mathbb E \left[ \left(f_\ell \left(\frac{i+1}{N}\frac{\pi}{2}\right)- f_\ell \left(\frac{i}{N}\frac{\pi}{2}\right)\right) \left(f_\ell \left(\frac{j+1}{N}\frac{\pi}{2}\right)- f_\ell \left(\frac{j}{N}\frac{\pi}{2}\right)\right)\right]\\
&&=C_\ell \frac{2\ell +1}{4\pi}\left(2 P_\ell \left(\cos \frac{|i-j|}{N}\frac{\pi}{2}\right)-  P_\ell \left(\cos \frac{|i-j-1|}{N}\frac{\pi}{2}\right) -  P_\ell \left(\cos \frac{|i-j+1|}{N}\frac{\pi}{2}\right)\right),
\end{eqnarray*}
and hence,
\begin{eqnarray*}
&&\sum_{i,j=1}^N \mathbb E \left[ \left(f_\ell \left(\frac{i+1}{N}\frac{\pi}{2}\right)- f_\ell \left(\frac{i}{N}\frac{\pi}{2}\right)\right) \left(f_\ell \left(\frac{j+1}{N}\frac{\pi}{2}\right)- f_\ell \left(\frac{j}{N}\frac{\pi}{2}\right)\right)\right]^2\\
&&=C_\ell^2 \left(\frac{2\ell +1}{4\pi}\right)^2 \sum_{k=1}^N (N-k) \left(2 P_\ell \left(\cos \frac{k}{N}\frac{\pi}{2}\right)-  P_\ell \left(\cos \frac{k-1}{N}\frac{\pi}{2}\right) -  P_\ell \left(\cos \frac{k+1}{N}\frac{\pi}{2}\right)\right)^2.
\end{eqnarray*}

For $\ell$ such that $\ell/N \to 0$ we can, upon writing
\begin{eqnarray*}
	P_\ell \left(\cos\frac{\psi}{N}\right)&\sim&J_0 \left(\ell \frac{\psi}{N}\right)
\end{eqnarray*}
for $N\to\infty$, obtain (similarly to the constant $\ell$ case) by mean value theorem (that is, considering $\frac{1}{N\slash \ell}$ as increment)
\begin{eqnarray*}
&&2 P_\ell \left(\cos \frac{k}{N}\frac{\pi}{2}\right)-  P_\ell \left(\cos \frac{k-1}{N}\frac{\pi}{2}\right) -  P_\ell \left(\cos \frac{k+1}{N}\frac{\pi}{2}\right)\\
&&\sim -\frac{\pi^2}{8}\frac{\ell^2}{N^2}\left(J_0\left(\ell\frac{k}{N}\frac{\pi}{2}\right)- J_2\left(\ell\frac{k}{N}\frac{\pi}{2}\right)\right),
\end{eqnarray*}
which is for growing $k$ of order
\begin{eqnarray*}
-\frac{\pi^{3/2}}{4\sqrt{2}}\left(\frac{\ell}{N}\right)^{3/2}\frac{1}{\sqrt{k}}.
\end{eqnarray*}
Thus, we have for the variance
\begin{eqnarray*}
&&\sum_{i,j=1}^N \mathbb E \left[ \left(f_\ell \left(\frac{i+1}{N}\frac{\pi}{2}\right)- f_\ell \left(\frac{i}{N}\frac{\pi}{2}\right)\right) \left(f_\ell \left(\frac{j+1}{N}\frac{\pi}{2}\right)- f_\ell \left(\frac{j}{N}\frac{\pi}{2}\right)\right)\right]^2\\
&&\sim C_\ell^2 \left(\frac{2\ell +1}{4\pi}\right)^2 N \frac{\pi^3}{32}\frac{\ell^3}{N^3} \sum_{k=1}^N \frac{1}{k}\sim C_\ell^2 \left(\frac{2\ell +1}{4\pi}\right)^2 \frac{\pi^3}{32}\ell^3 N^{-2}\log N\\
&&\sim C_\ell^2\frac{\pi}{128} \ell^5 N^{-2}\log N.
\end{eqnarray*}
Let us now analyse the case  $\ell/N \to \infty$ or $\ell /N \to c\neq 0$ for $N\to\infty$. Note first that for $\psi =k\frac{\pi}{2}$ the terms $\sin\left(\psi + \frac{\pi}{4}\right)$ are constant (and equal $\frac{\sqrt 2}{2}$) up to a sign change. Moreover, among the two terms $P_\ell \left(\cos \frac{k-1}{N}\frac{\pi}{2}\right)$ and $ P_\ell \left(\cos \frac{k+1}{N}\frac{\pi}{2}\right)$ exactly one has the same sign as $P_\ell \left(\cos \frac{k}{N}\frac{\pi}{2}\right)$ in the asymptotics. Let it (without loss of generality) be the first term. Then (ignoring the second summand that is dominated by the first) we obtain, as $\ell\to\infty$ and $N\to\infty$,
\begin{eqnarray*}
&&2 P_\ell \left(\cos \frac{k}{N}\frac{\pi}{2}\right)-  P_\ell \left(\cos \frac{k-1}{N}\frac{\pi}{2}\right) -  P_\ell \left(\cos \frac{k+1}{N}\frac{\pi}{2}\right)\\
&&\sim \sqrt\frac{2}{\pi \ell}\frac{\sqrt 2}{2}\left(2\sqrt\frac{1}{\sin \left(\frac{k}{N}\frac{\pi}{2}\right)}-\sqrt\frac{1}{\sin \left(\frac{k-1}{N}\frac{\pi}{2}\right)}+ \sqrt\frac{1}{\sin \left(\frac{k+1}{N}\frac{\pi}{2}\right)}\right)\\
&&\sim \sqrt\frac{2}{\pi \ell}\frac{\sqrt 2}{2}\left(-\frac{1}{N^2}\left(\frac{1}{2}\sqrt\frac{1}{\sin \left(\frac{k}{N}\frac{\pi}{2}\right)}-\frac{3}{4}\frac{\cos^2 \left(\frac{k}{N}\frac{\pi}{2}\right)}{\sin ^{5/2} \left(\frac{k}{N}\frac{\pi}{2}\right)}\right)+ 2\sqrt\frac{1}{\sin \left(\frac{k+1}{N}\frac{\pi}{2}\right)}\right)\\
&&\sim \sqrt\frac{2}{\pi \ell}\frac{\sqrt 2}{2}2\sqrt\frac{1}{\sin \left(\frac{k+1}{N}\frac{\pi}{2}\right)}\sim\frac{2}{\sqrt{\pi \ell \sin \left(\frac{k}{N}\frac{\pi}{2}\right) }},
\end{eqnarray*}
since the last term dominates the sum.

We obtain for the variance:
\begin{eqnarray*}
&& C_\ell^2 \left(\frac{2\ell +1}{4\pi}\right)^2 \sum_{k=1}^N (N-k) \left(2 P_\ell \left(\cos \frac{k}{N}\frac{\pi}{2}\right)-  P_\ell \left(\cos \frac{k-1}{N}\frac{\pi}{2}\right) -  P_\ell \left(\cos \frac{k+1}{N}\frac{\pi}{2}\right)\right)^2\\
&&\sim  C_\ell^2 \left(\frac{2\ell +1}{4\pi}\right)^2 N \sum_{k=1}^N \frac{4}{\pi \ell \sin \left(\frac{k}{N}\frac{\pi}{2}\right)  } \sim  C_\ell^2 \left(\frac{2\ell +1}{4\pi}\right)^2 N \frac{4}{\ell \pi}\frac{2}{\pi} N\log(N)\\
&&\sim \frac{2}{\pi^4}C_\ell^2 \ell N^2 \log (N).
\end{eqnarray*}

Let us summarise our results. We have for $N\to\infty$ and $\ell = \ell(N)\to\infty$:
\[\Var(V_{N,\ell}) \sim \begin{cases} C C_\ell^2 \ell N^2 \log N & \text{ if } N/\ell(N)\to 0, \\ CC_\ell^2 N^3 \log N & \text{ if } \ell(N)/ N\to c,\\   CC_\ell^2 \ell^5 N^{-2}\log N & \text{ if } \ell(N)/N\to 0,\end{cases}\]
where the constant $C$ can vary depending on the regime. In fact, we can see that there are only two different regimes for the variance: plugging in $\ell=cN$ into either the first or the third expression yields, indeed, the expression in the second case.

\begin{remark}
The fact that the expectation and variance of quadratic variations exhibit a change in their behaviour might seem somewhat surprising. Thinking in terms of the behaviour of Legendre polynomials might explain this phenomenon. If $N$ grows slower than $\ell$, one can say heuristically that there are more oscillations of the Legendre polynomials appearing than there are discretisation points ''smoothing out'' these irregularities.
Remarkably, the growth of $\sqrt{\Var (V_{N,\ell})}$ is slower by a logarithmic factor than that of $\mathbb E [V_{N,\ell}]$. Heuristically, again, one can explain this by noticing that while the expectation only depends on local behaviour of $f_\ell$, that is, on covariances between neighbours, the computation of variance involves covariance terms of points that are far apart. The former correspond to a monotonous, non-oscillating part of $P_\ell$ (i.e. its values close to one) while the latter are described by $P_\ell$ on the interval $(0,1)$ and involve many oscillations responsible for the growth of quadratic variations.
\end{remark}

Now that we have established the appropriate normalisation, we prove a CLT for the normalised version of the empirical quadratic variations, namely for
\[F_N:=\frac{1}{\sqrt{v_N}}\sum_{i=1}^N \left(f_\ell \left(\frac{i+1}{N}\frac{\pi}{2}\right)- f_\ell \left(\frac{i}{N}\frac{\pi}{2}\right)\right)^2-\mathbb E \left(f_\ell \left(\frac{i+1}{N}\frac{\pi}{2}\right)- f_\ell \left(\frac{i}{N}\frac{\pi}{2}\right)\right)^2, \]
where
\[v_N=\Var \left(\sum_{i=1}^N \left(f_\ell \left(\frac{i+1}{N}\frac{\pi}{2}\right)- f_\ell \left(\frac{i}{N}\frac{\pi}{2}\right)\right)^2\right)\sim C C_\ell^2 \ell N^2 \log(N)\]
for some constant $C$. To this end, recall that we have
\[f_\ell(x)=\int \sqrt{C_\ell}\frac{2\ell+1}{4\pi}P_\ell (\langle x,\,y\rangle)dW(y)\]
and thus
\[f_\ell \left(\frac{i+1}{N}\frac{\pi}{2}\right)- f_\ell \left(\frac{i}{N}\frac{\pi}{2}\right)=I_1\left(\sqrt{C_\ell}\frac{2\ell+1}{4\pi} \left(P_\ell (\langle \frac{i+1}{N}\frac{\pi}{2},\cdot \rangle)-P_\ell (\langle \frac{i}{N}\frac{\pi}{2},\cdot \rangle) \right)\right)=:I_1(h^i_N(\cdot)).\]
Therefore, one can view $F_N$ as elements of the second Wiener chaos with respect to $W$. We are going to prove a quantitative CLT by means of the fourth moment theorem.

Let us formulate and prove the CLT for $F_N$.
\begin{theorem}\label{CLTfl}
As $N\to\infty$ we have
\[d(F_N,\,N)\lesssim \frac{1}{\log N},\]
where $N$ is the standard normal distribution.
\end{theorem}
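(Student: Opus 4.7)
The plan is to apply the quantitative Fourth Moment Theorem (Theorem \ref{4MT}) to the second-chaos representation $F_N=I_2(g_N)$ obtained in the preceding section, where $g_N=v_N^{-1/2}\sum_{i=1}^N (h_N^i)^{\otimes 2}$ and $\mathbb E F_N^2=1$ by the choice of $v_N$. From \eqref{derivM} one has $DF_N=2I_1(g_N(\cdot,\star))$, and the product formula \eqref{product} together with the isometry relation yields
\[
\tfrac12\|DF_N\|_{\mathcal H}^2 = 2I_2(g_N\otimes_1 g_N)+\mathbb E F_N^2,\qquad \Var\bigl(\tfrac12\|DF_N\|_{\mathcal H}^2\bigr)=8\|g_N\otimes_1 g_N\|_{\mathcal H^{\otimes 2}}^2.
\]
Hence the theorem reduces the task to showing $\|g_N\otimes_1 g_N\|_{\mathcal H^{\otimes 2}}\lesssim 1/\log N$.

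Setting $r(i,j):=\langle h_N^i,h_N^j\rangle_{\mathcal H}$ and expanding the contraction, one finds
\[
\|g_N\otimes_1 g_N\|_{\mathcal H^{\otimes 2}}^2=\frac{1}{v_N^2}\sum_{i_1,i_2,i_3,i_4=1}^N r(i_1,i_2)\,r(i_2,i_3)\,r(i_3,i_4)\,r(i_4,i_1)=\frac{\mathrm{tr}(R^4)}{v_N^2},
\]
where $R=(r(i,j))$ is the symmetric positive semi-definite Toeplitz matrix whose entries, up to the prefactor $C_\ell(2\ell+1)/(4\pi)$, are the symmetric second differences $2P_\ell(\cos\tfrac{k\pi}{2N})-P_\ell(\cos\tfrac{(k-1)\pi}{2N})-P_\ell(\cos\tfrac{(k+1)\pi}{2N})$ already analysed in the variance computation of the previous section.

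The analytic heart of the argument will be to prove $\mathrm{tr}(R^4)\lesssim v_N^2/(\log N)^2$. Exploiting the Toeplitz structure, I would rewrite the trace as $\mathrm{tr}(R^4)\sim N\sum_m s(m)^2$ with $s(m)=\sum_k r(0,k)\,r(0,k+m)$, and then insert the Hilb-type asymptotics of Proposition \ref{P-asympt} for $P_\ell$ separately in each of the three regimes $\ell/N\to 0$, $\ell/N\to c$, and $N/\ell\to 0$. The key structural point, already visible in the variance computation, is that the logarithmic factor in $v_N\sim N\,s(0)$ arises from the divergent sum $\sum_k 1/\sin(k\pi/(2N))$ (or its low-frequency analogue $\sum_k 1/k$), while the off-diagonal convolutions $s(m)$, $m\neq 0$, lack this singular contribution, so the ratio $\mathrm{tr}(R^4)/v_N^2$ gains two powers of $\log N$ compared with the naive bound $\mathrm{tr}(R^4)\leq\mathrm{tr}(R^2)^2$. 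The main obstacle lies precisely in this uniform convolution estimate: the Hilb expansion involves the oscillatory factor $\sin(\psi+\pi/4)$ whose sign pattern must be tracked along all four summation indices to rule out artificial cancellations or resonances, and the estimate has to be carried out in a regime-independent manner so that the resulting bound $1/\log N$ is uniform as asserted.
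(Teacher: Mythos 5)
Your reduction is exactly the paper's: the fourth moment theorem applied to $F_N=I_2(g_N)$, with the problem reduced to showing that the cyclic quadruple sum $\mathrm{tr}(R^4)=\sum r(i_1,i_2)r(i_2,i_3)r(i_3,i_4)r(i_4,i_1)$ is $\lesssim v_N^2/\log^2 N$, which is precisely the quantity $\Var(\|DF_N\|^2_{\mathcal H})$ the paper estimates. The one step you leave open and flag as the main obstacle is in fact immediate and needs no tracking of the sign pattern of $\sin(\psi+\pi/4)$, since only an upper bound is required: the variance computation of the preceding section already supplies, in every regime, the absolute bound $|r(i,j)|\lesssim a_{N,\ell}\,|i-j|^{-1/2}$ with the same regime-dependent prefactor $a_{N,\ell}$ for which $v_N\sim c\,N a_{N,\ell}^2\log N$, so setting $\varphi_N(k)=k^{-1/2}1_{\{0<k\le N\}}$ and applying Young's inequality gives $\mathrm{tr}(R^4)\lesssim a_{N,\ell}^4\,N\,\|\varphi_N*\varphi_N\|^2_{\ell^2(\mathbb{Z})}\le a_{N,\ell}^4\,N\,\|\varphi_N\|^4_{\ell^{4/3}(\mathbb{Z})}\sim a_{N,\ell}^4 N^2$, which carries no logarithm and hence yields the two powers of $\log N$ from $v_N^2$ --- this is the argument of Theorem 7.3.1 in \cite{NP} that the paper invokes to close the proof.
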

\begin{proof}
By standard calculations (involving the product formula and the formula for Malliavin derivatives of multiple integrals) we have
\[\Var (\|DF_N\|_{\mathcal H}^2)=\frac{1}{v_N^2}\sum_{i,j, k, l=1}^N \langle h^i_N, h^j_N\rangle_{L^2(\mathbb S^2)} \langle h^k_N, h^l_N\rangle_{L^2(\mathbb S^2)} \langle h^i_N, h^k_N\rangle_{L^2(\mathbb S^2)} \langle h^j_N, h^l_N\rangle_{L^2(\mathbb S^2)}.\]
Note that it suffices to show convergence of this term to obtain a CLT.

Starting from here we need to distinguish between the different regimes.
For $\ell/N \to \infty$ or $\ell /N \to c\neq 0$ as $N\to\infty$ we have
\begin{eqnarray*}
&&\langle h^i_N, h^j_N\rangle_{L^2(\mathbb S^2)} = C_\ell \frac{2\ell+1}{4\pi }\left(2 P_\ell \left(\cos \frac{|i-j|}{N}\frac{\pi}{2}\right)-  P_\ell \left(\cos \frac{|i-j-1|}{N}\frac{\pi}{2}\right) -  P_\ell \left(\cos \frac{|i-j+1|}{N}\frac{\pi}{2}\right)\right)\\
&&\sim C C_\ell \ell \sqrt{\frac{N}{\ell |i-j|}},
\end{eqnarray*}
following the asymptotic computations from above. We thus obtain
\begin{eqnarray*}
&&\Var (\|DF_N\|_{\mathcal H}^2)\sim C \frac{1}{v_N^2} C_\ell^4 \ell^4 \frac{N^2}{\ell^2}\sum_{i,j, k, l=1}^N  \frac{1}{\sqrt{|i-j||k-l||i-k||j-l|}}\\
&&\sim \frac{1}{C_\ell^4 \ell^2 N^4 \log^2 N}C_\ell^4 \ell^2 N^2 \sum_{i,j, k, l=1}^N   \frac{1}{\sqrt{|i-j||k-l||i-k||j-l|}}\\
&&\sim \frac{1}{N^2\log^2 N}\sum_{i,j, k, l=1}^N \frac{1}{\sqrt{|i-j||k-l||i-k||j-l|}}.
\end{eqnarray*}
Meanwhile, for $\ell /N \to 0$ as $N\to\infty$ we have
\begin{eqnarray*}
&&\langle h^i_N, h^j_N\rangle_{L^2(\mathbb S^2)} = C_\ell \frac{2\ell+1}{4\pi }\left(2 P_\ell \left(\cos \frac{|i-j|}{N}\frac{\pi}{2}\right)-  P_\ell \left(\cos \frac{|i-j-1|}{N}\frac{\pi}{2}\right) -  P_\ell \left(\cos \frac{|i-j+1|}{N}\frac{\pi}{2}\right)\right)\\
&&\sim C C_\ell \ell \left(\frac{\ell}{N}\right)^{3/2}\sqrt{\frac{1}{|i-j|}},
\end{eqnarray*}
and therefore,
\begin{eqnarray*}
&&\Var (\|DF_N\|_{\mathcal H}^2)\sim C \frac{1}{v_N^2} C_\ell^4 \ell^4 \frac{\ell^6}{N^6}\sum_{i,j, k, l=1}^N  \frac{1}{\sqrt{|i-j||k-l||i-k||j-l|}}\\
&&\sim \frac{1}{C_\ell^4 \ell^10 N^{-4} \log^2 N}C_\ell^4 \ell^{10} N^{-6} \sum_{i,j, k, l=1}^N   \frac{1}{\sqrt{|i-j||k-l||i-k||j-l|}}\\
&&\sim \frac{1}{N^2\log^2 N}\sum_{i,j, k, l=1}^N \frac{1}{\sqrt{|i-j||k-l||i-k||j-l|}}.
\end{eqnarray*}
Finally, we use a standard argument (see e.g. proof of Theorem 7.3.1 in \cite{NP}) and write for $\varphi_N (k)=\frac{1}{\sqrt{k}}1_{\{k\leq N, k\neq 0\}}$
\begin{eqnarray*}
&&\Var (\|DF_N\|_{\mathcal H}^2)\lesssim \frac{1}{N^2\log^2 N} \sum_{i,k=1}^N (\varphi_N\ast\varphi_N)^2 (k-i)\\
&&\lesssim \frac{1}{N \log^2 N} \sum_{k\in\mathbb Z} (\varphi_N\ast\varphi_N)^2 (k) = \frac{1}{N \log^2 N} \|(\varphi_N\ast\varphi_N\|_{\ell^2(\mathbb Z)}^2\\
&&\leq  \frac{1}{N \log^2 N} \|\varphi_N\|^4_{\ell^{3\slash 4}(\mathbb Z)}\sim C\frac{1}{N \log^2 N} \left(\sum_{k=1}^N k^{-2\slash 3}\right)^3\sim C \frac{1}{\log^2 N},
\end{eqnarray*}
and the proof is finished.
\end{proof}
\section{Quadratic variations of $f$}\label{whole}
The computations in the previous chapter allow us to derive a CLT for the whole field $f$. In this chapter we investigate the behaviour of the sequence
\[V_{N}:=\sum_{i=1}^N \left(f \left(\frac{i+1}{N}\frac{\pi}{2}\right)- f \left(\frac{i}{N}\frac{\pi}{2}\right)\right)^2,\]
where the same abbreviating notation is used as for $V_{N,\ell}$. To be able to calculate asymptotic covariances, we need to impose a concrete assumption on the behaviour of $C_\ell$, namely
\begin{equation}
 C_\ell\sim_c \ell^{-2-\varepsilon}\tag{A}
\end{equation}
for some small $\varepsilon >0$. Note that this is a simplification of a standard assumption which reflects the behaviour of the angular power spectrum associated with the cosmic microwave background (see e.g. Condition 1 in \cite{CM} and the discussion following it). From the technical point of view, this assumption is not necessary and one might be able to repeat the below computations for a more general class of $C_\ell$.

The plan of action does not differ from the sections before. However, here our main tool is to divide the sum over $\ell$ into two parts and treat them separately. We start with the computation of the expectation.
\begin{eqnarray*}
&&\mathbb E \left[\sum_{i=1}^N \left(f \left(\frac{i+1}{N}\frac{\pi}{2}\right)- f \left(\frac{i}{N}\frac{\pi}{2}\right)\right)^2\right]  \\
&&=\sum_{i=1}^N \left(\mathbb E \left[f \left(\frac{i+1}{N}\frac{\pi}{2}\right)^2\right]+ \mathbb E \left[f \left(\frac{i}{N}\frac{\pi}{2}\right)^2\right]-2\mathbb E \left[f \left(\frac{i+1}{N}\frac{\pi}{2}\right)(f_\ell \left(\frac{i}{N}\frac{\pi}{2}\right)\right]\right)\\
&&=\sum_{i=1}^N 2\sum_{\ell=1}^\infty C_\ell \frac{2\ell + 1}{4\pi}(P_\ell (1)-P_{\ell}\left(\cos \frac{\pi}{2N}\right))\\
&&=2N \sum_{\ell=1}^{N^\alpha}C_\ell \frac{2\ell + 1}{4\pi}(1-P_{\ell}\left(\cos\frac{\pi}{2N}\right))+2N \sum_{\ell=N^\alpha + 1}^{\infty}C_\ell \frac{2\ell + 1}{4\pi}(1-P_{\ell}\left(\cos\frac{\pi}{2N}\right))
\end{eqnarray*}
for some $\alpha \in (0,1)$. The first of the two summands is asymptotically equal to
\[2N \sum_{\ell =1}^{N^\alpha}C_\ell \frac{2\ell+1}{4\pi}\frac{\pi^2}{8}\frac{\ell(\ell+1)}{N^2}\sim_c N^{2\alpha -\alpha\varepsilon-1}.\]
The second summand is bounded by
\[2N \sum_{\ell=N^\alpha + 1}^{\infty}C_\ell \frac{2\ell + 1}{4\pi}\sim_c N^{1-\alpha\varepsilon}.\]
For $\alpha$ close to $1$ the asymptotic value of the first summand approaches (from below) the bound of the second, i.e. the expectation is bounded by $N^{1-\varepsilon}$ (up to a constant). Moreover, this bound is attained by the partial sum between $\ell=N$ and $\ell=2N$. Thus, the expactation is of order $N^{1-\varepsilon}$.

Now we turn to variance. Analogously to the computations for $f_\ell$, we obtain:
\begin{eqnarray*}
\Var(V_N)\sim_c\sum_{\ell_1,\ell_2=1}^\infty C_{\ell_1}C_{\ell_2}\ell_1\ell_2\sum_{k=1}^N N (\ddelta P_{\ell_1}(k, N))  (\ddelta P_{\ell_2}(k, N)),
\end{eqnarray*}
where
\[\ddelta P_{\ell}(k, N)=2 P_\ell \left(\cos \frac{k}{N}\frac{\pi}{2}\right)-  P_\ell \left(\cos \frac{k-1}{N}\frac{\pi}{2}\right) -  P_\ell \left(\cos \frac{k+1}{N}\frac{\pi}{2}\right).\]
Note that by calculations in the previous section the bound
\[|\ddelta P_{\ell}(k, N)|\leq C\frac{1}{\sqrt{k}}\]
with an absolute constant $C$ holds for all regimes of convergence.

We proceed now as in the expectation computation and split the sum:
\begin{eqnarray*}
\sum_{\ell_1,\ell_2=1}^\infty = \sum_{\ell_1,\ell_2=1}^{N^\alpha} +2 \sum_{\ell_1=N^\alpha +1}^{\infty}\sum_{\ell_2=1}^{N^{\alpha}} +  \sum_{\ell_1,\ell_2=N^{\alpha}+1}^{\infty}.
\end{eqnarray*}
The first summand is of order
\[N^{-2}\log(N)\left(\sum_{\ell}^{N^\alpha} \ell^{-1-\varepsilon}\ell^{3\slash 2}\right)^2\sim_c \log(N)N^{-2+3\alpha-2\varepsilon\alpha},\]
which can be seen by using the asymptotics for $\ddelta P_{\ell}(k, N)$ proved in the previous chapter.
Note that here we can directly assume $\alpha \in (0,1]$, since there is no change in regime at $\alpha=1$ for variance. For the second summand we obtain the bound
\[\sum_{\ell_1=N^\alpha + 1}^\infty \ell_1^{-1-\varepsilon}\sum_{\ell_2=1}^{N^\alpha}\ell_2^{-1-\varepsilon+3\slash 2}N^{1-3\slash 2}\log N\sim_c N^{-2\alpha\varepsilon+\frac{3}{2}\alpha-\frac{1}{2}}\log N,\]
using the approximation of $\ddelta P_{\ell}(k, N)$ for $\ell_2$ and the coarse universal bound for $\ell_1$. With the same coarse estimate we can bound the third summand by
\[N\log N \left(\sum_{\ell =N^\alpha +1}^{\infty} \ell^{-1-\varepsilon}\right)^2\sim N^{1-2\alpha\varepsilon}\log(N).\]
As with expectation, the bounds and the asymptotic speed of the first summand all fall together for $\alpha=1$, yielding
\[\Var(V_N)\sim_c \log(N)N^{1-2\varepsilon}.\]
In order to prove a CLT for $f$, recall first that $f(x)=\sum_{\ell=1}^\infty f_\ell(x)$ holds in $L^2$-sense. Thus,
\[f(x)=I_1\left(\sum_{\ell=1}^\infty  \sqrt{C_\ell}\frac{2\ell+1}{4\pi}P_\ell (\langle x,\cdot \rangle)\right),\]
and the fourth moment theorem can be applied. We repeat the notation used for $f_\ell$ and write
\[F_N:=\frac{1}{\sqrt{v_N}}\sum_{i=1}^N \left(f \left(\frac{i+1}{N}\frac{\pi}{2}\right)- f \left(\frac{i}{N}\frac{\pi}{2}\right)\right)^2-\mathbb E \left(f \left(\frac{i+1}{N}\frac{\pi}{2}\right)- f \left(\frac{i}{N}\frac{\pi}{2}\right)\right)^2, \]
where
\[v_N=\Var \left(\sum_{i=1}^N \left(f \left(\frac{i+1}{N}\frac{\pi}{2}\right)- f \left(\frac{i}{N}\frac{\pi}{2}\right)\right)^2\right)\sim_c \log (N) N^{1-2\varepsilon}. \]
We further denote
\[\sqrt{C_\ell}\frac{2\ell+1}{4\pi} \left(P_\ell (\langle \frac{i+1}{N}\frac{\pi}{2},\cdot \rangle)-P_\ell (\langle \frac{i}{N}\frac{\pi}{2},\cdot \rangle) \right)=:h^i_{N,\ell}\]
and $$\sum_{\ell=1}^\infty h^i_{N,\ell}=:H^i_N,$$
such that
\[f \left(\frac{i+1}{N}\frac{\pi}{2}\right)- f \left(\frac{i}{N}\frac{\pi}{2}\right)=I_1(H^i_N(\cdot)).\]

\begin{theorem}\label{CLTf}
As $N\to\infty$ we have
\[d(F_N,\,N)\lesssim \frac{1}{\log N},\]
where $N$ is the standard normal distribution.
\end{theorem}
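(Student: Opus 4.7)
The plan is to apply the fourth moment theorem (Theorem~\ref{4MT}) to $F_N$, viewed as the element of the second Wiener chaos
\[F_N=I_2(g_N),\qquad g_N:=\frac{1}{\sqrt{v_N}}\sum_{i=1}^N (H^i_N)^{\otimes 2},\]
whose kernel is already symmetric. Since $\Var(F_N)\to 1$, the theorem reduces matters to establishing $\Var(\|DF_N\|_{\mathcal H}^2)\lesssim 1/\log^2 N$. The product formula \eqref{product} together with the identity \eqref{derivM}, applied exactly as in the proof of Theorem \ref{CLTfl}, gives
\[\Var(\|DF_N\|_{\mathcal H}^2)=\frac{C}{v_N^2}\sum_{i,j,k,l=1}^N \langle H^i_N,H^j_N\rangle_{\mathcal H}\langle H^k_N,H^l_N\rangle_{\mathcal H}\langle H^i_N,H^k_N\rangle_{\mathcal H}\langle H^j_N,H^l_N\rangle_{\mathcal H}.\]

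The decisive input is a sharp bound on the inner products $\langle H^i_N,H^j_N\rangle_{\mathcal H}$. The orthogonality $\int_{\mathbb S^2}P_{\ell_1}(\langle x,y\rangle)P_{\ell_2}(\langle x',y\rangle)d\sigma(y)=0$ for $\ell_1\neq\ell_2$, inherited from the addition theorem for spherical harmonics, decouples the frequency layers and yields
\[\langle H^i_N,H^j_N\rangle_{\mathcal H}=\sum_{\ell=1}^\infty C_\ell\frac{2\ell+1}{4\pi}\ddelta P_\ell(|i-j|,N).\]
I would split the sum at $\ell=N$ and invoke the asymptotic estimates derived in Section \ref{increasing}: for $\ell\lesssim N$, $|\ddelta P_\ell(k,N)|\lesssim(\ell/N)^{3/2}/\sqrt{k}$, while for $\ell\gtrsim N$, $|\ddelta P_\ell(k,N)|\lesssim\sqrt{N/(\ell k)}$. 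Inserting $C_\ell\sim_c\ell^{-2-\varepsilon}$ and summing, both contributions balance at order $N^{-\varepsilon}/\sqrt{k}$, giving the key estimate
\[|\langle H^i_N,H^j_N\rangle_{\mathcal H}|\lesssim \frac{N^{-\varepsilon}}{\sqrt{|i-j|}}\qquad\text{for }|i-j|\geq 1.\]

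Plugging this into the formula for $\Var(\|DF_N\|_{\mathcal H}^2)$ together with the variance asymptotics $v_N\sim_c\log N\cdot N^{1-2\varepsilon}$, the prefactor $N^{-4\varepsilon}/v_N^2$ collapses precisely to $1/(N^2\log^2 N)$, yielding
\[\Var(\|DF_N\|_{\mathcal H}^2)\lesssim \frac{1}{N^2\log^2 N}\sum_{i,j,k,l=1}^N\frac{1}{\sqrt{|i-j||k-l||i-k||j-l|}}.\]
This is exactly the combinatorial expression already bounded by $1/\log^2 N$ in the proof of Theorem \ref{CLTfl} via Young's convolution inequality applied to $\varphi_N(k)=k^{-1/2}\mathbf 1_{\{1\leq k\leq N\}}$. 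Taking square roots and applying Theorem \ref{4MT} then gives the claimed $d(F_N,N)\lesssim 1/\log N$.

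The main obstacle I anticipate is the uniform-in-$(\ell,N)$ character of the bound on $\ddelta P_\ell(k,N)$: the estimates of Section \ref{increasing} are stated separately in the three regimes $\ell/N\to 0,c,\infty$, so to carry out the split of the series in $\ell$ rigorously one needs either a uniform form of Hilb's expansion or a careful interpolation near the crossover $\ell\sim N$. A secondary technicality is the handling of diagonal terms (where indices coincide), for which the bound $\|H^i_N\|_{\mathcal H}^2\sim_c N^{-\varepsilon}$ together with the truncation in $\varphi_N$ is already enough to show that they are absorbed into the same $1/\log^2 N$ bound as in the single-frequency case.
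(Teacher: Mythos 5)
Your proposal follows essentially the same route as the paper: the fourth moment theorem applied to $F_N=I_2(g_N)$, orthogonality of the Legendre kernels to reduce $\langle H^i_N,H^j_N\rangle_{\mathcal H}$ to a sum over frequency layers, a split of that sum at the crossover $\ell\sim N$ (the paper uses $N^\alpha$ and lets $\alpha\to 1$) to obtain the key bound $|\langle H^i_N,H^j_N\rangle_{\mathcal H}|\lesssim N^{-\varepsilon}/\sqrt{|i-j|}$, and the same convolution argument to close. The technical caveats you flag (uniformity of the Hilb-type estimates near $\ell\sim N$ and the diagonal terms) are not addressed in the paper either, so your write-up is, if anything, slightly more careful.
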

\begin{proof}
As in the proof of Theorem \ref{CLTfl}, we have
\[\Var (\|DF_N\|_{\mathcal H}^2)=\frac{1}{v_N^2}\sum_{i,j, k, l=1}^N \langle H^i_N, H^j_N\rangle_{L^2(\mathbb S^2)} \langle H^k_N, H^l_N\rangle_{L^2(\mathbb S^2)} \langle H^i_N, H^k_N\rangle_{L^2(\mathbb S^2)} \langle H^j_N, H^l_N\rangle_{L^2(\mathbb S^2)}.\]
Note first that by orthogonality we have
\[ \langle H^i_N, H^j_N\rangle_{L^2(\mathbb S^2)}=\sum_{\ell=1}^\infty \langle h^i_{N,\ell},\, h^i_{N,\ell}\rangle_{L^2(\mathbb S^2)},\]
and we can, again, split the sum into a part below $N^\alpha$ and a part above it. By usual calculation, using the asymptotics for $\ddelta P_\ell$ and its coarse estimate respectively, we can see that the first summand is of order
$$\frac{1}{\sqrt{|i-j|}}N^{-\frac{3}{2}+\frac{3}{2}\alpha-\alpha\varepsilon}$$
and the second is bounded by $\frac{1}{\sqrt{|i-j|}} N^{-\alpha\varepsilon}$. With arguments as before it follows that
\[\left|\langle H^i_N, H^j_N\rangle_{L^2(\mathbb S^2)}\right|\lesssim \frac{1}{\sqrt{|i-j|}} N^{-\varepsilon}.\]
Thus,
\begin{eqnarray*}
&&\Var (\|DF_N\|_{\mathcal H}^2)\lesssim \frac{1}{v_N^2}N^{-4\varepsilon}\sum_{i,j, k, l=1}^N \frac{1}{\sqrt{|i-j||k-l||i-k||j-l|}}\\
&&\sim_c \frac{1}{\log^2(N) N^{2-4\varepsilon}}N^{-4\varepsilon}\sum_{i,j, k, l=1}^N \frac{1}{\sqrt{|i-j||k-l||i-k||j-l|}}\sim_c \frac{1}{\log^2(N)}
\end{eqnarray*}
by the same closing argument as in Theorem \ref{CLTfl}.
\end{proof}
\begin{remark}
Clearly, the results in this section can be used for parameter estimation related to the covariance function. Indeed, the estimator $\hat{c}:=\frac{V_N}{N^{1-\varepsilon}}$ is an asymptotically unbiased estimator of the constant in front of the expectation with variance of order $\frac{\log N}{N}$. Thus, it is weakly consistent. Let us now consider as an example an $L^2(\mathbb S^2)$-valued fractional Brownian motion $B_t^H$ defined as
\[B_t^H(x):=\sum_{\ell = 0}^\infty\sum_{m=-\ell}^\ell B^H_{\ell m}(t)Y_{\ell m}(x)\]
for $x\in\mathbb S^2$, where $B^H_{\ell m}(t)$ are independent complex-valued fractional Brownian motions over $\mathbb R^+$ with Hurst index $H$, variances $A_\ell$ and $\operatorname{Im}(B^H_{\ell 0}(t))=0$ for all $\ell\in\mathbb N$ and $t\geq 0$. It has the following covariance structure  (see e.g. \cite{ABOW} for more details):
\[\mathbb E \left[B_t^H(x) B_t^H(y)\right]=t^{2H}\sum_{\ell=0}^\infty A_\ell (2\ell +1)P_\ell (\cos d(x,y)).\]
For such a process, given at a certain time point $t$ (and assuming that the assumption $(A)$ holds for the sequence $A_\ell$), the limit of the estimator $\hat{c}$ would depend linearly on $t^{2H}$. Thus, one can derive a weakly consistent estimator of the Hurst parameter $H$ from observations of $B^H$ along one line on the sphere at two different time points $t$ and $s$, since
\[\frac{V_N(B^H_t)}{V_N(B^H_s)}\stackrel{N\to\infty}{\to} \left(\frac{t}{s}\right)^{2H} \text{ in probability}\]
for the respective empirical quadratic variations of $B^H_t$ and $B^H_s$.

The CLT in Theorem \ref{CLTf} (extended to joint convergence in the fractional Brownian motion example) yields asymptotic normality for such estimators by delta method.
\end{remark}

\section{Estimation of the angular power spectrum}\label{estimation}
The explicitly known expression for the expectation of $V_{N,\ell}$ gives rise to a construction of an unbiased estimator for the angular power spectrum $C_\ell$. Recall that our setting are discrete high frequency observations of $f_\ell$ \textit{along one line} on the unit sphere.
The classical error term to consider in the context of the estimation of $C_\ell$ is $\frac{\hat{C_\ell}}{C_\ell}-1$ for an estimator $\hat{C_\ell}$.

In this chapter we will analyse the asymptotic properties of the estimator defined by
\[\hat{C_\ell}:=\frac{V_{N,\ell}}{2N\frac{2\ell+1}{4\pi}\left(1-P_\ell \left(\cos \left(\frac{\pi}{2N}\right)\right)\right)}\]
 in different regimes as well as consider some other useful constructions. First, consider this estimator
in the context of Chapter \ref{fixed}, that is, for constant $\ell$. It turns out that it is not consistent in any meaningful sense. More precisely, we have
\[\frac{\hat{C_\ell}}{C_\ell}-1=\frac{V_{N,\,\ell}- \mathbb E V_{N,\,\ell}}{ \mathbb E V_{N,\,\ell}},\]
and thus,
\begin{eqnarray*}
\frac{V_{N,\,\ell}-\mathbb E V_{N,\,\ell}}{\sqrt{\Var(V_{N,\,\ell})}} = \frac{\mathbb E_{N,\,\ell}}{\sqrt{\Var(V_{N,\,\ell})}}\left(\frac{\hat{C_\ell}}{C_\ell}-1\right).
\end{eqnarray*}
 The left hand side converges weakly to a (nonzero) random variable, as shown in Theorem \ref{NCLTSphere}, and as $\mathbb E V_{N,\ell}$ and $\sqrt{\Var(V_{N,\ell})}$ are of the same order, so does $\frac{\hat{C_\ell}}{C_\ell}-1$.

Now let us move on to the growing frequency regime. There, we have the following result.

\begin{prop}
For $\ell=\ell (N)\to\infty$ the estimator $\hat{C_\ell}$ is $L^2$-consistent if $\frac{\log N}{\ell}\to 0$ (and strongly consistent if the decay is polynomial) and we have
\[d \left(c\sqrt{\frac{\ell}{\log N}}\left(\frac{\hat{C_\ell}}{C_\ell}-1)\right),N(0,1)\right)\lesssim \frac{1}{\log N},\]
where $c$ is a (known) absolute constant that can vary depending on the regime.
\end{prop}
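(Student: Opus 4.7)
The starting observation is that, by definition of $\hat{C_\ell}$ and of $\mathbb{E}V_{N,\ell}$, we have the identity
\[
\frac{\hat{C_\ell}}{C_\ell}-1 \;=\; \frac{V_{N,\ell}-\mathbb{E}V_{N,\ell}}{\mathbb{E}V_{N,\ell}},
\]
so that all three assertions reduce to understanding the ratio $\sqrt{\Var(V_{N,\ell})}/\mathbb{E}V_{N,\ell}$ together with the CLT of Theorem \ref{CLTfl}. The plan is therefore to first verify that, in every one of the three regimes of Section \ref{increasing}, this ratio is asymptotically equivalent to a constant multiple of $\sqrt{\log N/\ell}$, and then to parley the results already established into the three conclusions.

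First I would plug the asymptotics already computed in Section \ref{increasing} into $\Var(V_{N,\ell})/(\mathbb{E}V_{N,\ell})^2$ case by case. For $\ell/N\to\infty$ one has $\mathbb{E}V_{N,\ell}\sim_c C_\ell\ell N$ and $\Var(V_{N,\ell})\sim_c C_\ell^2\ell N^2\log N$, giving a ratio of order $\log N/\ell$. For $\ell/N\to c$ the expectation is still of order $C_\ell\ell N$ while the variance is of order $C_\ell^2 N^3\log N\sim_c C_\ell^2\ell N^2\log N$, so the same ratio emerges. In the remaining regime $\ell/N\to 0$ the extra factor $\ell^2/N^2$ in $\mathbb{E}V_{N,\ell}$ squares to $\ell^4/N^4$, which is exactly canceled by the corresponding factor in the variance formula $C_\ell^2\ell^5 N^{-2}\log N$, again leaving $\log N/\ell$. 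Hence there exists a regime-dependent constant $c>0$ such that
\[
c\sqrt{\frac{\ell}{\log N}}\,\frac{\sqrt{\Var(V_{N,\ell})}}{\mathbb{E}V_{N,\ell}}\;\longrightarrow\;1.
\]

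Second, $L^2$-consistency under $\log N/\ell\to 0$ is immediate from the Chebyshev-type identity
\[
\mathbb{E}\!\left[\left(\frac{\hat{C_\ell}}{C_\ell}-1\right)^{\!2}\right]\;=\;\frac{\Var(V_{N,\ell})}{(\mathbb{E}V_{N,\ell})^2}\;\sim_c\;\frac{\log N}{\ell}.
\]
For strong consistency under polynomial decay of $\log N/\ell$, I would invoke the hypercontractivity bound \eqref{hyper}, which applies since $V_{N,\ell}-\mathbb{E}V_{N,\ell}$ lives in the second Wiener chaos: for any $p\geq 2$,
\[
\mathbb{P}\!\left(\left|\tfrac{\hat{C_\ell}}{C_\ell}-1\right|>\varepsilon\right)\;\leq\;\varepsilon^{-p}\,C_p\!\left(\tfrac{\log N}{\ell}\right)^{\!p/2},
\]
and then choose $p$ large enough (depending on the polynomial rate) so that the bound is summable in $N$, allowing Borel--Cantelli to close the argument.

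Finally, for the quantitative CLT I would simply factorise
\[
c\sqrt{\frac{\ell}{\log N}}\!\left(\frac{\hat{C_\ell}}{C_\ell}-1\right)\;=\;\Bigl(c\sqrt{\tfrac{\ell}{\log N}}\,\tfrac{\sqrt{\Var(V_{N,\ell})}}{\mathbb{E}V_{N,\ell}}\Bigr)\cdot F_N,
\]
where $F_N$ is the normalised statistic of Theorem \ref{CLTfl}. The prefactor converges to $1$ by the computation above (with error of order $1/\log N$ once one carries the Taylor expansions of Section \ref{increasing} one term further), while Theorem \ref{CLTfl} controls the distance of $F_N$ to $N(0,1)$ by $1/\log N$. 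Combining the two via a standard Lipschitz argument for the Kolmogorov/Wasserstein/total-variation distance yields the claimed bound. The step most likely to require care is the quantitative comparison between the prefactor and $1$ in each regime, since the hidden constants in the $\sim_c$ relations of Section \ref{increasing} must be tracked with enough precision for the error to be absorbed into the $1/\log N$ rate; the $L^2$-consistency and the hypercontractivity-based strong consistency, by contrast, are essentially bookkeeping.
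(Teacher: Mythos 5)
Your proposal is correct and follows essentially the same route as the paper: the exact identity $\frac{\hat{C_\ell}}{C_\ell}-1=\frac{V_{N,\ell}-\mathbb{E}V_{N,\ell}}{\mathbb{E}V_{N,\ell}}$, the observation that $\mathbb{E}V_{N,\ell}/\sqrt{\Var(V_{N,\ell})}$ is of order $\sqrt{\ell/\log N}$ in all three regimes, hypercontractivity plus Borel--Cantelli for strong consistency, and Theorem \ref{CLTfl} for the quantitative bound. You are in fact somewhat more explicit than the paper, which simply asserts the regime-independence of the ratio rather than checking it case by case, and you rightly flag the quantitative comparison of the prefactor with $1$ as the delicate point (the paper glosses over it as well).
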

\begin{proof}
$L^2$-consistency of this unbiased estimator follows by computing
\[\mathbb E \left[\left(\frac{\hat{C_\ell}}{C_\ell}-1\right)^2\right]= \Var \left(\frac{\hat{C_\ell}}{C_\ell}\right)\sim_c \frac{\log N}{\ell} \]
from the expectation and variance asymptotics derived in the previous chapter. Under polynomial decay of $ \frac{\log N}{\ell}$ strong consistency is a direct consequence of this fact obtained by hypercontractivity allowing a standard Borel-Cantelli argument.
The quantitative CLT follows from Theorem \ref{CLTfl} due to the relation
\begin{eqnarray*}
\frac{V_{N,\,\ell}-\mathbb E V_{N,\,\ell}}{\sqrt{\Var(V_{N,\,\ell})}} = \frac{\mathbb E V_{N,\,\ell}}{\sqrt{\Var(V_{N,\,\ell})}}\left(\frac{\hat{C_\ell}}{C_\ell}-1\right),
\end{eqnarray*}
since
\[\frac{ \mathbb E V_{N,\,\ell}}{\sqrt{\Var(V_{N,\,\ell})}}\]
is of order $\sqrt{\frac{\ell}{\log N}}$ in all regimes.
\end{proof}

\begin{remark}
As mentioned in the introduction, the problem of estimating the angular power spectrum from Fourier coefficients of $f$ has a standard solution that is well-studied and documented, for instance, in \cite{MP}. Namely, given an observation of the Fourier coefficients $(a_{\ell m})$ for $m=-\ell ,\dots ,\ell$ and $\ell\in\mathbb N$, one can estimate $C_\ell$ with $\tilde{C_\ell}:=\frac{1}{2\ell +1}\sum_{m=-\ell}^{\ell}|a_{\ell m}|^2$. The variance of this estimator decays as $\frac{1}{2\ell +1}$, and one obtains
\[d_{TV}\left(\sqrt\frac{2\ell +1}{2}\left(\frac{\tilde{C_\ell}}{C_\ell}-1\right), N(0,1)\right)\leq \sqrt\frac{8}{2\ell +1}.\]
The estimators defined above can be trivially recovered from the Fourier coefficients of $f$ and use less information (assuming that one has access to observations of $f_\ell$ over a single line). For every choice of $N\to\infty$ the variance of $\hat{C_\ell}$ is higher than that of $\tilde{C_\ell}$, however, for $N$ growing at most polynomially in $\ell$ the difference is small. Note, moreover, that there is a tradeoff between the speeds of convergence for the second and the third order limits. Thus, by choosing $\log N=\ell^x$ with $x\in (0.5,1)$ we obtain an estimator whose mean squared error converges to zero as $\ell^{x-1}$, but whose distance in total variation to $N(0,1)$ can be bounded by $\ell^{-x}$, which is a faster rate than that of the classical estimator.
\end{remark}

Since for practical reasons the computation of $P_\ell$ necessary for the implementation of $\hat{C_\ell}$ might be memory-consuming, one might be interested in considering alternative, asymptotically unbiased estimators defined using the approximate expressions for the expectation, namely
\begin{eqnarray*}
\hat{C_\ell}^{(1)}&:=&\frac{V_{N,\ell}}{2N\frac{2\ell+1}{4\pi}},\\
\hat{C_\ell}^{(2)}&:=&\frac{V_{N,\ell}}{2N\frac{2\ell+1}{4\pi}(1-J_0(\frac{\pi}{2}c))},\\
\hat{C_\ell}^{(3)}&:=&\frac{V_{N,\ell}}{2N\frac{2\ell+1}{4\pi}\frac{\pi^2}{8}\frac{\ell^2}{N^2}}
\end{eqnarray*}
in the cases $N/\ell \to 0$, $\ell /N\to c$ and $\ell /N\to 0$ respectively. We obtain the following asymptotic results.

\begin{prop}
For $i\in{1,\,2,\,3}$ the estimators $\hat{C_\ell}^{(i)}$ are $L^2$-consistent if $\frac{\log N}{\ell}\to 0$ (and strongly consistent in many particular cases specified below) and we have
\[d \left(c\sqrt{\frac{\ell}{\log N}}\left(\frac{\hat{C_\ell}^{(i)}}{C_\ell}-1-\operatorname{bias}(\hat{C_\ell}^{(i)})\right),N(0,1)\right)\lesssim \frac{1}{\log N},\]
where $c$ is a (known) absolute constant that can vary depending on the regime and
\[\operatorname{bias}(\hat{C_\ell}^{(i)})=\mathbb E \left[\frac{\hat{C_\ell^{(i)}}}{C_\ell}-1\right].\]
\end{prop}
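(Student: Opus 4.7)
The natural strategy is to reduce each $\hat{C_\ell}^{(i)}$ to the original estimator $\hat{C_\ell}$ treated in the previous proposition. Let $a_N$ denote the denominator in the definition of $\hat{C_\ell}$ and $a_N^{(i)}$ the denominator of $\hat{C_\ell}^{(i)}$; the scaling relation $\hat{C_\ell}^{(i)} = (a_N/a_N^{(i)})\,\hat{C_\ell}$ is then immediate, and the expectation asymptotics computed in Section~\ref{increasing} (using Taylor expansion at $t=1$, the Mehler--Rayleigh formula, and the Hilb-type estimate of Proposition~\ref{P-asympt} respectively) give
\[
\frac{a_N}{a_N^{(i)}} \stackrel{N\to\infty}{\longrightarrow} 1
\]
in each respective regime. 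Since $\mathbb E\hat{C_\ell}=C_\ell$, the deterministic bias equals $\operatorname{bias}(\hat{C_\ell}^{(i)}) = a_N/a_N^{(i)} - 1$, which in particular tends to zero.

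\textbf{$L^2$- and strong consistency.} Decomposing the mean-squared error into variance plus squared bias,
\[
\mathbb E\!\left[\left(\frac{\hat{C_\ell}^{(i)}}{C_\ell}-1\right)^{\!2}\right] = \left(\frac{a_N}{a_N^{(i)}}\right)^{\!2}\Var\!\left(\frac{\hat{C_\ell}}{C_\ell}\right) + \left(\frac{a_N}{a_N^{(i)}}-1\right)^{\!2},
\]
the first summand is $\sim_c \log N/\ell$ by the previous proposition and the second vanishes in each regime, so both tend to zero under $\log N/\ell \to 0$. For strong consistency I would observe that $\hat{C_\ell}^{(i)} - \mathbb E\hat{C_\ell}^{(i)}$ lies in the second Wiener chaos with respect to $W$, so by the hypercontractivity bound \eqref{hyper} all its $L^p$-norms are controlled by its variance; a standard Borel--Cantelli argument along a polynomially growing subsequence then yields a.s.\ convergence of the stochastic part whenever the variance decays at a polynomial rate, while the deterministic bias is handled separately.

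\textbf{Quantitative CLT.} The key algebraic identity is
\[
\frac{\hat{C_\ell}^{(i)}}{C_\ell} - 1 - \operatorname{bias}(\hat{C_\ell}^{(i)}) = \frac{a_N}{a_N^{(i)}}\left(\frac{\hat{C_\ell}}{C_\ell} - 1\right),
\]
so the left-hand side, once multiplied by $c\sqrt{\ell/\log N}$, equals $(a_N/a_N^{(i)})$ times the rescaled variable of the previous proposition, whose distance to $N(0,1)$ is already $\lesssim 1/\log N$ by Theorem~\ref{CLTfl}. Writing $Y_N := c\sqrt{\ell/\log N}(\hat{C_\ell}/C_\ell-1)$ and $Z_N^{(i)}$ for its $\hat{C_\ell}^{(i)}$-analogue, a triangle inequality gives
\[
d\!\left(Z_N^{(i)},N(0,1)\right) \le \frac{a_N}{a_N^{(i)}}\cdot d\!\left(Y_N,N(0,1)\right) + d\!\left(N\!\left(0,(a_N/a_N^{(i)})^2\right),N(0,1)\right).
\]
The first term is $\lesssim 1/\log N$ by the previous proposition, and the second is a standard Gaussian scale-change cost of order $|a_N/a_N^{(i)}-1|$.

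\textbf{Main obstacle.} The only nontrivial step is therefore the quantitative control $|a_N/a_N^{(i)} - 1| \lesssim 1/\log N$, which has to be verified regime by regime. This requires sharper remainder bounds than those used for the leading-order asymptotics of $\mathbb E V_{N,\ell}$: a higher-order Taylor expansion of $P_\ell(\cos(\pi/2N))$ around $\cos(\pi/2N)=1$ for $i=3$, an explicit expansion of $J_0$ near $c\pi/2$ for $i=2$, and a refinement of the Hilb-type approximation in Proposition~\ref{P-asympt} for $i=1$. These are purely analytic estimates on Legendre and Bessel functions and constitute the technical heart of the argument.
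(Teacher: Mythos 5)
Your overall reduction to the unbiased estimator is the same as the paper's: the exact linear relation $\hat{C_\ell}^{(i)}=(a_N/a_N^{(i)})\hat{C_\ell}$, the identity $\operatorname{bias}(\hat{C_\ell}^{(i)})=a_N/a_N^{(i)}-1$, the variance-plus-squared-bias decomposition of the mean squared error, and hypercontractivity plus Borel--Cantelli for strong consistency all match. Two things differ. First, the paper actually computes the biases regime by regime ($\operatorname{bias}(\hat{C_\ell}^{(1)})\sim\sqrt{N/\ell}$ via Proposition \ref{P-asympt}, $\operatorname{bias}(\hat{C_\ell}^{(2)})\sim_c 1/\ell$ via the Mehler--Rayleigh expansion, $\operatorname{bias}(\hat{C_\ell}^{(3)})\sim\ell^2/N^2$ via the next Taylor term), and uses these to say which of bias and variance dominates the $L^2$-error for each $i$; you only assert that the bias tends to zero, which is enough for the bare consistency claim but loses the quantitative content.

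Second, and this is a genuine gap: your CLT step does not close. You normalise by the same sequence $c\sqrt{\ell/\log N}$ as for $\hat{C_\ell}$, obtain $Z_N^{(i)}=(a_N/a_N^{(i)})Y_N$, and then need to pay a Gaussian scale-change cost of order $|a_N/a_N^{(i)}-1|=|\operatorname{bias}(\hat{C_\ell}^{(i)})|$, which you defer as a "technical obstacle" to be verified at rate $1/\log N$. But that bound is false in general: in the regime $N/\ell\to0$ the bias of $\hat{C_\ell}^{(1)}$ is of order $\sqrt{N/\ell}$, which can decay arbitrarily slowly (take $\ell=N\log\log N$), and similarly $\ell^2/N^2$ for $\hat{C_\ell}^{(3)}$ need not be $O(1/\log N)$ when $\ell/N\to0$ slowly. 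No sharpening of the Legendre or Bessel asymptotics will rescue this, because the quantity itself is too large. The paper avoids the problem entirely by normalising with the \emph{exact} deterministic factor $E_{N,\ell}^{(i)}/\sqrt{\Var(V_{N,\ell})}$ where $E_{N,\ell}^{(i)}:=V_{N,\ell}C_\ell/\hat{C_\ell}^{(i)}=C_\ell a_N^{(i)}$: with that choice the bias-corrected, rescaled error is \emph{identically} equal to $F_N=(V_{N,\ell}-\mathbb E V_{N,\ell})/\sqrt{\Var(V_{N,\ell})}$, so Theorem \ref{CLTfl} applies with no additional scale-change term, and since $E_{N,\ell}^{(i)}\sim\mathbb E V_{N,\ell}$ the normalisation is still asymptotically $c\sqrt{\ell/\log N}$. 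You should restructure the last step accordingly: absorb the factor $a_N^{(i)}/a_N$ into the normalising sequence rather than into the distance estimate.
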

\begin{proof}
We begin by computing the biases of the three estimators. For $N/\ell \to 0$ we have
$$\operatorname{bias}(\hat{C_\ell}^{(1)})=\mathbb E \left[\frac{\hat{C_\ell}^{(1)}}{C_\ell}-1\right] =1-P_\ell \left(\cos \left(\frac{\pi}{2N}\right)\right)-1\sim \sqrt{\frac{N}{\ell}}$$
by Proposition \ref{P-asympt}. For $\ell/N\to c$ the bias is of order $\frac{1}{\ell}$, which corresponds to the second order term in the polynomial expansion of $P_\ell(\cos(\rho/ \ell))$ given e.g. in \cite{BT}. Finally, in the case $\ell /N\to 0$ the bias is calculated by evaluating the third order term in the Taylor expansion for $P_\ell$:
$$\operatorname{bias}(\hat{C_\ell}^{(3)})=\mathbb E \left[\frac{\hat{C_\ell}^{(3)}}{C_\ell}-1\right] =\frac{1-P_\ell \left(\cos \left(\frac{\pi}{2N}\right)\right)-\frac{\pi^2}{8}\frac{\ell^2}{N^2}}{\frac{\pi^2}{8}\frac{\ell^2}{N^2}}\sim \frac{\ell^2}{N^2}.$$
$L^2$-consistency of the three estimators now follows by direct calculation:
\[\mathbb E \left[\left(\frac{\hat{C_\ell}^{(i)}}{C_\ell}-1\right)^2\right]=\mathbb E \left[\frac{\hat{C_\ell}^{(i)}}{C_\ell}-1\right]^2 + \Var \left(\frac{\hat{C_\ell}^{(i)}}{C_\ell}\right) \lesssim \max\left(\mathbb E \left[\frac{\hat{C_\ell}^{(i)}}{C_\ell}-1\right]^2, \frac{\log N}{\ell}\right), \]
and under polynomial decay of the right hand side strong consistency follows as in the previous proposition. Note that for $\hat{C_\ell}^{(1)}$ the bias term $\frac{N}{\ell}$ always dominates, for $\hat{C_\ell}^{(2)}$ the variance $\frac{\log N}{\ell}$ takes overhand and for $\hat{C_\ell}^{(3)}$ the variance term dominates for $N$ growing faster than $\ell^{\frac{5}{4}+\varepsilon}$.

 For the quantitative CLT we note that for
$$ \frac{V_{N,\ell}C_\ell}{\hat{C_\ell}^{(i)}}=: E_{N,\,\ell}^{(i)}\sim \mathbb E V_{N,\ell}$$
we have
\[ \frac{ E_{N,\,\ell}}{\sqrt{\Var(V_{N,\,\ell})}}\left(\frac{\hat{C_\ell}^{(i)}}{C_\ell}-1-\operatorname{bias}(\hat{C_\ell}^{(i)})\right)=\frac{V_{N,\,\ell}-\mathbb E V_{N,\,\ell}}{\sqrt{\Var(V_{N,\,\ell})}},\]
and the statement follows as in the unbiased case.
Note further that $\operatorname{bias}(\hat{C_\ell}^{(i)}) \frac{ E_{N,\,\ell}}{\sqrt{\Var(V_{N,\,\ell})}}$ tends to zero in the second scenario and also in the third as soon as one has $\frac{\ell^{2.5}}{N^2\log N}\to 0$.
\end{proof}

{\bf{Acknowledgements.}}\\
The financial support of the DFG-SFB 823 is gratefully acknowledged.

\bibliography{biblio}{}
\bibliographystyle{apalike}  

\end{document}